\pdfoutput=1
%
%
%
%
%
\documentclass[letterpaper,    
               english,        
               12pt,           
               twoside]        
               {amsart}        
\usepackage[english]{babel}                   
\usepackage[bookmarks,                        
            bookmarksopen=true,               
            bookmarksnumbered=true,           
            pdfauthor={Christof Puhle},       
            pdftitle={On generalized
                      quasi-Sasaki manifolds},
            pdfkeywords={Almost contact metric structures,
                         contact manifolds,
                         generalized quasi-Sasaki manifolds,
                         semi-cosymplectic manifolds,
                         nearly cosymplectic manifolds,
                         connections with torsion},
            colorlinks,                       
            linkcolor=black,                  
            citecolor=black,                  
            urlcolor=black,                   
            plainpages=false]                 
            {hyperref}                        
\usepackage[nobysame]
           {amsrefs}                          
\usepackage{fourier}                          
\usepackage{bm}                               
\usepackage{color}                            
%
%
%
%
\textwidth=14.51cm
\textheight=21.5cm
\advance\oddsidemargin by -0.93cm
\advance\evensidemargin by -0.92cm
\advance\topmargin by -0.55cm
%
%
%
%
%
\theoremstyle{plain}
\newtheorem{cor}{Corollary}[section]
\newtheorem{lem}{Lemma}[section]
\newtheorem{thm}{Theorem}[section]            
\newtheorem{prop}{Proposition}[section]
\theoremstyle{definition}
\newtheorem{exa}{Example}[section]
\newtheorem{rmk}{Remark}[section]

\newcommand{\be}[1][*]{\begin{equation#1}}
\newcommand{\ee}[1][*]{\end{equation#1}}
\newcommand{\ba}{\be\begin{aligned}}
\newcommand{\ea}{\end{aligned}\ee}
\newcommand{\barr}[1]{\begin{array}{#1}}
\newcommand{\earr}{\end{array}}
\newcommand{\bite}{\begin{itemize}}
\newcommand{\eite}{\end{itemize}}
\newcommand{\btab}[1]{\renewcommand{\arraystretch}{1.2}\begin{center}\begin{tabular}{#1}}
\newcommand{\etab}{\end{tabular}\end{center}\renewcommand{\arraystretch}{1.0}}
\newcommand{\bnum}{\begin{enumerate}}
\newcommand{\enum}{\end{enumerate}}
\newcommand{\bcen}{\begin{center}}
\newcommand{\ecen}{\end{center}}
%

%

%
%
\def\side#1{\ifvmode\leavevmode\fi\vadjust{\vbox to0pt{\vss
\hbox to 0pt{\hskip\hsize\hskip1em                     
\vbox{\hsize2cm\small\raggedright\pretolerance10000       
\noindent\textcolor{red}{#1}\hfill}\hss}\vbox to8pt{\vfil}\vss}}}
%
%
%
\newcommand{\x}{\ensuremath{\times}}
\newcommand{\op}{\ensuremath{\oplus}}
\newcommand{\ox}{\ensuremath{\otimes}}

\newcommand{\lb}{\ensuremath{\left}}
\newcommand{\rb}{\ensuremath{\right}}

\newcommand{\lan}{\ensuremath{\left\langle}}
\newcommand{\ran}{\ensuremath{\right\rangle}}
\newcommand{\hook}{\ensuremath{\lrcorner\,}}
\newcommand{\ra}{\ensuremath{\rightarrow}}

\newcommand{\setsep}{\ensuremath{\,\big|\,}}
%

%
%
\newcommand{\R}{\ensuremath{\mathbb{R}}}

\newcommand{\Lie}{\ensuremath{\mathcal{L}}}

\newcommand{\W}{\ensuremath{\mathcal{W}}}

\newcommand{\A}{\ensuremath{\mathcal{A}}}

\newcommand{\vphi}{\ensuremath{\varphi}}

\newcommand{\diag}{\ensuremath{\mathrm{diag}}}
\newcommand{\Id}{\ensuremath{\mathrm{Id}}}
\newcommand{\tr}{\ensuremath{\mathrm{tr}}}
\newcommand{\pr}{\ensuremath{\mathrm{pr}}}
\newcommand{\Ric}{\ensuremath{\mathrm{Ric}}}

\newcommand{\SL}{\ensuremath{\mathrm{SL}}}

\newcommand{\un}{\ensuremath{\mathfrak{u}}}
\newcommand{\Un}{\ensuremath{\mathrm{U}}}

\newcommand{\Orth}{\ensuremath{\mathrm{O}}}
\newcommand{\so}{\ensuremath{\mathfrak{so}}}

%

%

%
%
%
\begin{document}
\thispagestyle{empty}
\date{\today}
\title{On generalized quasi-Sasaki manifolds}
\author{Christof Puhle}
\address{
Institut f\"ur Mathematik \newline\indent
Humboldt-Universit\"at zu Berlin\newline\indent
Unter den Linden 6\newline\indent
10099 Berlin, Germany}
\email{\noindent puhle@math.hu-berlin.de}
\urladdr{www.math.hu-berlin.de/~puhle}
\subjclass[2010]{Primary 53C15; Secondary 53C25}
\keywords{Almost contact metric structures, contact manifolds, generalized quasi-Sasaki manifolds, semi-cosymplectic manifolds, nearly cosymplectic manifolds, connections with torsion}
\begin{abstract}
We study $5$-dimensional Riemannian manifolds that admit an almost contact metric structure. In particular, we generalize the class of quasi-Sasaki manifolds and characterize these structures by their intrinsic torsion. Among other things, we see that these manifolds admit a unique metric connection that is compatible with the underlying almost contact metric structure. Finally, we construct a family of examples that are not quasi-Sasaki.
\end{abstract}
\maketitle
\setcounter{tocdepth}{1}
\bcen
\begin{minipage}{0.7\linewidth}
    \begin{small}
      \tableofcontents
    \end{small}
\end{minipage}
\ecen
\pagestyle{headings}
%
%
%
%
\section{Introduction}\noindent
An almost contact metric manifold is an orientable Riemannian manifold $\lb(M^{2k+1},g\rb)$ of dimension $2k+1$ such that there exists a reduction of the structure group of orthonormal frames of the tangent bundle to the unitary group $\Un\left(k\right)$ (see \cite{Gra59}). Consequently, these manifolds represent the odd-dimensional analog of almost complex manifolds. As shown in \cites{SH61,SH62}, an almost contact metric structure on $\lb(M^{2k+1},g\rb)$ can be equivalently defined as a triple $\lb(\xi,\eta,\vphi\rb)$ consisting of a vector field $\xi$ of length one, its dual $1$-form $\eta$ and an endomorphism $\vphi$ of the tangent bundle satisfying certain relations (see \autoref{sec:2} for details). There are many special types of almost contact metric manifolds, of which the articles \cites{CG90,CM92,Puh12} may serve as a general reference. In order to characterize these types, it is customary to use differential equations involving the so-called fundamental $2$-form $\Phi$ and the Nijenhuis tensor $N$ of $\lb(M^{2k+1},g,\xi,\eta,\vphi\rb)$ (see \autoref{sec:2} for definitions). For example, Sasaki manifolds (see \cites{SH61,SH62}) are almost contact metric manifolds such that
\ba
d\eta &=2\cdot\Phi,& N&=0,
\ea
whereas cosymplectic manifolds satisfy
\ba
d\eta &=0,& d\Phi&=0,& N&=0.
\ea
Attempting to unify Sasakian and cosymplectic geometry, Blair introduced the class of quasi-Sasaki manifolds (see \cite{Bla67}). These are defined by
\ba
d\Phi &= 0,& N&=0.
\ea
In this case, the vector field $\xi$ is automatically a Killing vector field (see \cite{Bla67}).

Generalizing the class of quasi-Sasaki manifolds (see \autoref{sec:3}), the purpose of this article is to study almost contact metric $5$-manifolds that satisfy the following conditions:
\bite
\item[i)] The tensor fields $d\Phi$ and $N$ vanish on the distribution orthogonal to $\xi$.
\item[ii)] The vector field $\xi$ is a Killing vector field.
\eite
In particular, we are interested in the construction of examples that are not quasi-Sasaki. Almost contact metric $5$-manifolds that comply with properties i) and ii) are called generalized quasi-Sasaki manifolds.

In dimension $5$ (i.e.\ $k=2$), almost contact metric manifolds can be classified with respect to the algebraic type of the corresponding intrinsic torsion tensor $\Gamma$ (see \cite{Fri03}). There are $10$ irreducible $\Un(2)$-modules $\W_1,\ldots,\W_{10}$ in the decomposition of the space of possible intrinsic torsion tensors (see \cite{Puh12}):
\be
\Gamma\in\W_1\op\ldots\op\W_{10}.
\ee
Therefore, there exist $2^{10}=1024$ classes according to this approach. Obviously, most of them have never been studied. We review the special types mentioned above, in the light of this classification scheme, in \autoref{sec:3}: quasi-Sasaki manifolds correspond to the case $\Gamma\in\W_3\op\W_5$, cosymplectic manifolds are characterized by $\Gamma=0$, and generalized quasi-Sasaki manifolds correspond to the class $\W_3\op\W_4\op\W_5\op\W_7$ (see proposition \ref{prop:5}). Moreover, the considerations of \autoref{sec:3} (esp.\ theorem \ref{thm:1}) ensure that any generalized quasi-Sasaki manifold admits a unique metric connection $\nabla^c$ that is compatible with the underlying almost contact metric structure, i.e.\
\ba
\nabla^c\xi&=0, &
\nabla^c\eta&=0, &
\nabla^c\vphi&=0.
\ea
Viewed as a $\lb(3,0\rb)$-tensor field, the torsion tensor $T^c$ of the connection $\nabla^c$ is totally skew-symmetric if and only if $\Gamma\in\W_3\op\W_4\op\W_5$, and $T^c$ is traceless cyclic (i.e.\ contained in the third Cartan class; see \autoref{sec:3} for details) if and only if $\Gamma\in\W_7$ (see proposition \ref{prop:2}).

In \cite{Puh12}, the author presents explicit examples of almost contact metric $5$-mani\-folds whose intrinsic torsion tensors are of type
\be
\W_1,\W_2,\W_3,\W_5,\W_6,\W_8,\W_9,\W_{10}.
\ee
However, the only known almost contact metric structure of class $\W_4\op\W_7$ is neither of class $\W_4$ nor of class $\W_7$, it is a so-called nearly cosymplectic structure constructed on the $5$-sphere (see \cite{Bla71}). Examples with
\be
\Gamma\in\W_4,\quad \Gamma\neq0
\ee
are not only interesting in order to complete the list of examples in \cite{Puh12}. They would contradict proposition 3.1 in \cite{FI03} (cf.\ proposition \ref{prop:1} and remark \ref{rmk:1}). Moreover, structures of this type would admit a totally skew-symmetric Nijenhuis tensor $N\neq0$ (see theorem \ref{thm:1} together with propositions \ref{prop:1} and \ref{prop:2}).

From \autoref{sec:4} onward, we concentrate on almost contact metric structures of class $\W_4\op\W_7$. These generalized quasi-Sasaki manifolds are not quasi-Sasaki and can be characterized (see theorem \ref{thm:2}) by
\be
N(\xi,X,Y)=2\cdot d\eta(X,Y).
\ee
Moreover, we determine the following invariance property of the exterior derivative $d\eta$ (see proposition \ref{prop:3}):
\be
d\eta(\vphi(X),\vphi(Y))=-d\eta(X,Y).
\ee
This property enables us to construct a family $M^5(a_1,a_2,a_3,a_4)$ of almost contact metric $5$-manifolds of class $\W_4\op\W_7$ depending on four real parameters $a_1,a_2,a_3,a_4$, $a_1a_4=a_2a_3$ (see \autoref{sec:5}). These are submanifolds of $6$-dimensional Lie groups $G(a_1,a_2,a_3,a_4)$. We are able to identify the latter in those situations where at least one of the parameters $a_1,\ldots,a_4$ is zero (see propositions \ref{prop:7}, \ref{prop:6} and \ref{prop:4} together with remark \ref{rmk:3}). For example, if $a_1^2+a_2^2\neq0$ and $a_3=a_4=0$, $G(a_1,a_2,a_3,a_4)$ is locally isomorphic to $S^3\x S^3$ (see proposition \ref{prop:7}). As a result, the $5$-dimensional Stiefel manifold (see \cite{Sti35}) admits an almost contact metric structure of class $\W_4$ that is not cosymplectic (see example \ref{exa:1}).

Also, we study the metric connection $\nabla^c$ of the generalized quasi-Sasaki manifold $M^5(a_1,a_2,a_3,a_4)$. In this case, the holonomy group of $\nabla^c$ is $\Un(1)$ (see theorem \ref{thm:3},c)). Consequently, there exist two spinor fields that are parallel with respect to $\nabla^c$ (see theorem \ref{thm:3},e)).
%
%
%
%
\section{Almost contact metric \texorpdfstring{$5$}{5}-manifolds}\label{sec:2}\noindent
We first introduce some notation. Let $\lb(M^{2k+1},g\rb)$ be a Riemannian manifold of dimension $2k+1$. An \emph{almost contact metric structure} on $\lb(M^{2k+1},g\rb)$ consists of a vector field $\xi$ of length one, its dual $1$-form $\eta$ and an endomorphism $\vphi$ of the tangent bundle such that
\ba
\vphi\lb(\xi\rb)&=0, & \vphi^2&=-\Id+\eta\ox\xi, &
g\lb(\vphi\lb(X\rb),\vphi\lb(Y\rb)\rb)=g\lb( X,Y\rb)-\eta\lb(X\rb)\eta\lb(Y\rb).
\ea
Equivalently, these structures can be defined as a reduction of the structure group of orthonormal frames of the tangent bundle to the Lie group $\Un\left(k\right)$ (see \cites{Gra59,SH61,SH62}). The \emph{fundamental form} $\Phi$ of an \emph{almost contact metric manifold} $\lb(M^{2k+1},g,\xi,\eta,\vphi\rb)$ is a $2$-form given by
\be
\Phi\lb(X,Y\rb):=g\lb(X,\vphi\lb(Y\rb)\rb). 
\ee
In the exterior algebra, $\Phi$ satisfies $\eta\wedge\Phi^k\neq0$. Consequently, there exists an oriented orthonormal frame $\left(e_1,\ldots,e_{2k+1}\right)$ realizing
\ba
\Phi&=e_{1}\wedge e_{2}+\ldots+e_{2k-1}\wedge e_{2k}, & \eta&=e_{2k+1}
\ea
at every point of $M^{2k+1}$. Here and henceforth, we identify $TM^{2k+1}$ with its dual space using $g$. We call $\left(e_1,\ldots,e_{2k+1}\right)$ an \emph{adapted frame} of the almost contact metric manifold.  The connection forms
\be
\omega_{ij}^g:=g\lb(\nabla^{g}e_i,e_j\rb)
\ee
of the Levi-Civita connection $\nabla^{g}$ with respect to $\left(e_1,\ldots,e_{2k+1}\right)$ define a $1$-form 
\be
\Omega^g:=\lb(\omega_{ij}^g\rb)_{1\leq i,j\leq 2k+1}
\ee
with values in the Lie algebra $\so(2k+1)$. We define the \emph{intrinsic torsion} $\Gamma$ of the almost contact metric manifold as
\be
\Gamma(X) := \pr_{\un(k)^\perp}\lb(\Omega^g(X)\rb).
\ee
Here, $\pr_{\un(k)^\perp}$ denotes the projection onto the orthogonal complement $\un(k)^\perp$ of the Lie algebra $\un(k)$ inside $\lb(\so(2k+1),\lan x,y\ran=-(1/2)\cdot\tr\lb(x\circ y\rb)\rb)$,
\be
\so(2k+1) = \un(k)\op\un(k)^\perp.
\ee
Viewed as a $\left(3,0\right)$-tensor field, the \emph{Nijenhuis tensor} $N$ of $\lb(M^{2k+1},g,\xi,\eta,\vphi\rb)$ is defined by
\be
N\lb(X,Y,Z\rb):=g\lb(X,\lb[\vphi,\vphi\rb]\lb(Y,Z\rb)\rb)+\eta\lb(X\rb)d\eta\lb(Y,Z\rb), 
\ee
where $\lb[\vphi,\vphi\rb]$ is the Nijenhuis torsion of $\vphi$,
\ba
\lb[\vphi,\vphi\rb]\lb(X,Y\rb)=&
\lb[\vphi\lb(X\rb),\vphi\lb(Y\rb)\rb]
+\vphi^2\lb(\lb[X,Y\rb]\rb)
-\vphi\lb(\lb[\vphi\lb(X\rb),Y\rb]\rb)
-\vphi\lb(\lb[X,\vphi\lb(Y\rb)\rb]\rb)\\
=&\lb(\nabla^g_{\vphi\lb(X\rb)}\vphi\rb)\lb(Y\rb)
-\lb(\nabla^g_{\vphi\lb(Y\rb)}\vphi\rb)\lb(X\rb)
+\vphi\lb(\lb(\nabla^g_Y\vphi\rb)\lb(X\rb)-\lb(\nabla^g_X\vphi\rb)\lb(Y\rb)\rb),
\ea
and the differential $d\alpha$ and the codifferential $\delta\alpha$ of a differential form $\alpha$ are given by
\ba
d\alpha&=\sum_i e_i\wedge\nabla^{g}_{e_i}\alpha, &
\delta\alpha&=-\sum_i e_i\hook\nabla^{g}_{e_i}\alpha.
\ea
We conclude $N\in\Lambda^1\ox\Lambda^2$ denoting by $\Lambda^1$ and $\Lambda^2$ the spaces of $1$- and $2$-forms, respectively. The space
\be
\A:=\Lambda^1\ox\Lambda^2
\ee
splits into three irreducible $\Orth(2k+1)$-modules (see \cite{Car25}),
\be
\A=\A_1\op\A_2\op\A_3,
\ee
and a tensor field $A\in\A$ is said to be
\bite
\item\emph{vectorial} if $A\in\A_1$, or equivalently if there exists a vector field $V$ such that
\be
A(X,Y,Z)=g(X,Y)g(Z,V)-g(X,Z)g(Y,V).
\ee
\item\emph{totally skew-symmetric} if $A\in\A_2$, or equivalently if
\be
A(X,Y,Z)+A(Y,X,Z)=0.
\ee
\item\emph{traceless cyclic} if $A\in\A_3$, or equivalently if
\ba
A(X,Y,Z)+A(Y,Z,X)+A(Z,X,Y)&=0, & \sum_iA(e_i,e_i,X)&=0.
\ea
\eite

We then specialize to the $5$-dimensional case, i.e.\ $k=2$. In this situation, the space 
\be
\W:=\Lambda^1\ox\un(2)^\perp
\ee
of possible intrinsic torsion tensors splits into $10$ irreducible $\Un(2)$-modules (cf.\ \cite{Puh12}):
\be
\W=\W_1\op\ldots\op\W_{10}.
\ee
We say that an almost contact metric $5$-manifold \emph{is of class} $\W_{i_1}\op\ldots\op\W_{i_j}$ if
\be
\Gamma\in\W_{i_1}\op\ldots\op\W_{i_j}.
\ee
Moreover, $\lb(M^5,g,\xi,\eta,\vphi\rb)$ \emph{is of strict class} $\W_{i_1}\op\ldots\op\W_{i_j}$ if it is of class $\W_{i_1}\op\ldots\op\W_{i_j}$ and $\Gamma_{i_1}\neq0,\ldots,\Gamma_{i_j}\neq0$ denoting by $\Gamma_{i_1},\ldots,\Gamma_{i_j}$ the components of $\Gamma$ in $\W_{i_1},\ldots,\W_{i_j}$, respectively. Almost contact metric $5$-manifolds with $\Gamma=0$ are called \emph{integrable}. Consequently, there exist $2^{10}=1024$ classes according to this approach. Obviously, many of them neither have been studied nor carry any name. We present some of the established types of almost contact metric structures. An almost contact metric manifold is said to be
\bite
\item\emph{normal} (see \cites{SH61,SH62}) if its Nijenhuis tensor vanishes.
\item\emph{semi-cosymplectic} (see \cite{CG90}) if
\ba
\delta\Phi&=0, & \delta\eta&=0.
\ea
\item\emph{almost cosymplectic} (see \cite{Oub85}) if
\ba
d\Phi&=0, & d\eta&=0.
\ea
\item\emph{cosymplectic} (see \cite{Bla67}) if it is normal and almost cosymplectic, or equivalently if it is integrable.
\item\emph{quasi-Sasaki} (see \cite{Bla67}) if it is normal and 
\be
d\Phi=0.
\ee
\item\emph{nearly cosymplectic} (see \cite{Bla71}) if
\be
\lb(\nabla^g_X\vphi\rb)\lb(X\rb)=0.
\ee
\item\emph{quasi-cosymplectic} (see \cite{CM92}) if
\be
\lb(\nabla^g_X\vphi\rb)\lb(Y\rb)+\lb(\nabla^g_{\vphi\lb(X\rb)}\vphi\rb)\lb(\vphi\lb(Y\rb)\rb)=\eta\lb(Y\rb)\cdot\nabla^g_{\vphi\lb(X\rb)}\xi.
\ee
\eite
Moreover, we introduce those $\Un(2)$-submodules of $\W$ which are relevant for the purpose of this article. The Hodge operator $\ast$ is $\Un(2)$-equi\-var\-i\-ant, and the space of $2$-forms
\be
\Lambda^2=\Lambda^2_1\op\Lambda^2_2\op\Lambda^2_3\op\Lambda^2_4
\ee
decomposes into four irreducible $\Un(2)$-modules:
\ba
\Lambda^2_1 &:= \lb\{t\cdot\Phi\setsep t\in\R\rb\}, \\
\Lambda^2_2 &:= \lb\{\beta\in\Lambda^2\setsep \Phi\wedge\beta=0,\  \ast\beta=\eta\wedge\beta\rb\},\\
\Lambda^2_3 &:= \lb\{\beta\in\Lambda^2\setsep \ast\beta=-\eta\wedge\beta\rb\},\\
\Lambda^2_4 &:= \lb\{\beta\in\Lambda^2\setsep \eta\wedge\beta=0\rb\}.
\ea
The dimensions of these modules are
\be
\dim\lb(\Lambda^2_i\rb)=i,
\ee
and we identify $\so(5)$ with $\Lambda^2$,
\ba
\un(2)&=\Lambda^2_1\op\Lambda^2_3, & \un(2)^\perp &=\Lambda^2_2\op\Lambda^2_4.
\ea
With the aid of the $\Un(2)$-equivariant maps
\ba
\theta &:\Lambda^2\ra\A, &
\theta&\lb(\beta\rb)\lb(X,Y,Z\rb):=\ast\beta\lb(X,Y,Z\rb),\\
\vartheta &:\Lambda^2\ra\A, &
\vartheta&\lb(\beta\rb)\lb(X,Y,Z\rb):=3\,\eta\lb(X\rb)\beta\lb(Y,Z\rb)-\ast\beta\lb(X,Y,Z\rb)
\ea
and the projection $\pr_{\W}:\A\ra\W$,
\be
\pr_{\W}\lb(\alpha\ox\beta\rb):=\alpha\ox\pr_{\un(2)^\perp}\lb(\beta\rb),
\ee
we are able to describe the following irreducible $\Un(2)$-submodules of $\W$:
\ba
\W_3&=\pr_{\W}\lb(\theta\lb(\Lambda^2_1\rb)\rb), &
\W_4&=\pr_{\W}\lb(\theta\lb(\Lambda^2_2\rb)\rb)=\theta\lb(\Lambda^2_2\rb), & \W_5&=\pr_{\W}\lb(\theta\lb(\Lambda^2_3\rb)\rb),\\
\W_6&=\pr_{\W}\lb(\theta\lb(\Lambda^2_4\rb)\rb), &
\W_7&=\pr_{\W}\lb(\vartheta\lb(\Lambda^2_2\rb)\rb)=\vartheta\lb(\Lambda^2_2\rb).
\ea

The Riemannian covariant derivatives of $\xi$, $\eta$, $\vphi$ and $\Phi$, together with the Nijenhuis tensor $N$, can be expressed in terms of the intrinsic torsion (see \cite{Puh12}):
\be
g\lb(\nabla^g_X\xi,Y\rb)=\lb(\nabla^g_X\eta\rb)\lb(Y\rb)=\lb(\nabla^g_X\Phi\rb)\lb(\xi,\vphi\lb(Y\rb)\rb),
\ee
\be
g\lb(\lb(\nabla^g_X\vphi\rb)\lb(Y\rb),Z\rb)=\lb(\nabla^g_X\Phi\rb)\lb(Z,Y\rb),
\ee
\ba
N\lb(X,Y,Z\rb)=&\big(\nabla^g_{\vphi\lb(Y\rb)}\Phi\big)\lb(X,Z\rb)
-\big(\nabla^g_{\vphi\lb(Z\rb)}\Phi\big)\lb(X,Y\rb)
+\lb(\nabla^g_Y\Phi\rb)\lb(\vphi\lb(X\rb),Z\rb)\\
&-\lb(\nabla^g_Z\Phi\rb)\lb(\vphi\lb(X\rb),Y\rb)+\eta\lb(X\rb)\lb(\nabla^g_{Y}\Phi\rb)\lb(\xi,\vphi\lb(Z\rb)\rb)
-\eta\lb(X\rb)\lb(\nabla^g_{Z}\Phi\rb)\lb(\xi,\vphi\lb(Y\rb)\rb),
\ea
\be
\lb(\nabla^{g}_X\Phi\rb)\lb(Y,Z\rb) = \sum_i\Gamma\lb(X\rb)\lb(e_i,Y\rb)\Phi\lb(e_i,Z\rb)-\Gamma\lb(X\rb)\lb(e_i,Z\rb)\Phi\lb(e_i,Y\rb).
\ee
We denote by ($\bullet$) the system of equations above. Using this system, one can immediately check the following proposition.
\begin{prop}\label{prop:1}
An almost contact metric $5$-manifold $\lb(M^5,g,\xi,\eta,\vphi\rb)$ is of class
\bite
\item[a)] $\W_3\op\W_4\op\W_5\op\W_6$ if and only if its Nijenhuis tensor $N$ is totally skew-symmetric and $\xi$ is a Killing vector field.
\item[b)] $\W_3\op\W_5\op\W_6$ if and only if its Nijenhuis tensor $N$ is totally skew-symmetric and $\xi\hook d\Phi=0$.
\eite
\end{prop}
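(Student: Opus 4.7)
The strategy is entirely computational via the system $(\bullet)$, and both directions of each equivalence ultimately amount to checking linear, $\Un(2)$-invariant conditions on the intrinsic torsion $\Gamma\in\W$ against the $10$-term decomposition $\W=\W_1\op\ldots\op\W_{10}$. The basic observation is that all of the geometric quantities in the statement, namely $N$, $g(\nabla^g\xi,\cdot)$ and $\xi\hook d\Phi$, depend linearly on $\Gamma$, while each of the desired properties (total skew-symmetry of $N$, $\xi$ Killing, $\xi\hook d\Phi=0$) is itself linear and $\Un(2)$-invariant. By Schur's lemma, validity on a given $\Gamma$ is determined by its irreducible components $\Gamma_i\in\W_i$, and the proof reduces to computing on one representative per summand.

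For part a), I would use the first identity of $(\bullet)$ to rewrite the Killing condition as
\be
(\nabla^g_X\Phi)(\xi,\vphi(Y))+(\nabla^g_Y\Phi)(\xi,\vphi(X))=0,
\ee
and then substitute the last identity of $(\bullet)$ to obtain a pointwise algebraic equation in $\Gamma$; likewise, the explicit formula for $N$ in $(\bullet)$ converts total skew-symmetry of $N$ into a second linear equation in $\Gamma$. For the $(\Leftarrow)$ implication, every $\Gamma\in\W_3\op\W_4\op\W_5\op\W_6$ lies in $\pr_{\W}(\theta(\Lambda^2))$, hence is built from the $3$-form $\ast\beta$, and a short computation shows that both equations above are satisfied. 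For the $(\Rightarrow)$ implication, the decisive module is $\W_7=\vartheta(\Lambda^2_2)$, since the extra term $3\eta(X)\beta(Y,Z)$ in $\vartheta$, inserted into the formula for $N$, produces a contribution that is not skew-symmetric in the first two arguments; the remaining modules $\W_1,\W_2,\W_8,\W_9,\W_{10}$ are eliminated analogously by exhibiting explicit representatives (taken from the classification in \cite{Puh12}) that violate either the Killing condition on $\xi$ or the skew-symmetry of $N$.

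Part b) runs along the same lines after expanding $d\Phi=\sum_i e_i\wedge\nabla^g_{e_i}\Phi$ and contracting with $\xi$. The only change is that the module $\W_4=\theta(\Lambda^2_2)$ must now be removed, since its representatives $\theta(\beta)$ with $\beta\in\Lambda^2_2$ satisfy $\ast\beta=\eta\wedge\beta$, so that $\xi\hook d\Phi$ becomes proportional to $\beta$ and is therefore nonzero; by contrast, the defining relations of $\Lambda^2_1,\Lambda^2_3,\Lambda^2_4$ imply that $\theta(\beta)$ yields $\xi\hook d\Phi=0$ for representatives of $\W_3$, $\W_5$ and $\W_6$. The main obstacle throughout is careful bookkeeping of the $\Un(2)$-decomposition rather than any genuine conceptual difficulty; once the maps $\theta,\vartheta$ and the system $(\bullet)$ are properly set up, every check is a short tensor calculation in an adapted frame.
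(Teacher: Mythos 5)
Your proposal is correct and follows essentially the same route as the paper, whose entire proof is the remark that the claims can be checked directly from system $(\bullet)$; your Schur-lemma, module-by-module organization of that check (with $\W_7$ killed by the $3\,\eta\ox\beta$ term in $\vartheta$ for part a), and $\W_4$ killed by $\ast\beta=\eta\wedge\beta$ forcing $\xi\hook d\Phi\neq0$ for part b)) is exactly the intended computation, just written out.
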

\begin{rmk}\label{rmk:1}
Consequently, any non-integrable ($\Gamma\neq0$) example of class $\W_4$ would contradict proposition 3.1 in \cite{FI03}.
\end{rmk}
%
%
%
%
%
\section{Generalized quasi-Sasaki manifolds}\label{sec:3}\noindent
Introduced as an attempt to unify Sasakian and cosymplectic geometry (see \cite{Bla67}), quasi-Sasaki manifolds $\lb(M^5,g,\xi,\eta,\vphi\rb)$ satisfy
\ba
N&=0,& d\Phi&=0,
\ea
and $\xi$ is a Killing vector field with respect to $g$. Motivated by these three properties, we define a \emph{generalized quasi-Sasaki manifold} $\lb(M^5,g,\xi,\eta,\vphi\rb)$ as an almost contact metric $5$-manifold such that the following hold:
\bite
\item[i)] The equations
\ba
N(X,Y,Z)&=0,& d\Phi(X,Y,Z)&=0
\ea
are satisfied for any $X,Y,Z$ orthogonal to $\xi$.
\item[ii)] The vector field $\xi$ is a Killing vector field with respect to $g$.
\eite
\begin{prop}\label{prop:5}
An almost contact metric $5$-manifold is a generalized quasi-Sasaki manifold (resp.\ quasi-Sasaki manifold) if and only if it is of class $\W_3\op\W_4\op\W_5\op\W_7$ (resp.\ class $\W_3\op\W_5$).
\end{prop}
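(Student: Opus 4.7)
The plan is to translate the defining conditions of (generalized) quasi-Sasaki manifolds, via the identities in $(\bullet)$, into conditions on the intrinsic torsion $\Gamma$, and then to match them against the irreducible decomposition $\W=\W_1\op\ldots\op\W_{10}$.

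For the quasi-Sasaki case, the Killing property of $\xi$ follows automatically from $N=0$ and $d\Phi=0$ (see \cite{Bla67}); hence proposition~\ref{prop:1},\,a) applies and yields $\Gamma\in\W_3\op\W_4\op\W_5\op\W_6$. It then remains to exclude the modules $\W_4$ and $\W_6$. Using the descriptions $\W_4=\theta(\Lambda^2_2)$ and $\W_6=\pr_\W(\theta(\Lambda^2_4))$ and substituting into $(\bullet)$, one computes the contributions of these modules to $N$ and $d\Phi$ and checks that every non-zero representative produces a non-zero value in at least one of them. Conversely, for $\Gamma\in\W_3\op\W_5$, direct substitution into $(\bullet)$ immediately gives $N=0$ and $d\Phi=0$.

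The generalized quasi-Sasaki case follows a parallel scheme, but with the weaker obstructions $N|_{\xi^\perp}$, $d\Phi|_{\xi^\perp}$ and the symmetric part of $\nabla^g\eta$, all of which are $\Un(2)$-equivariant functions of $\Gamma$. The modules $\W_3$ and $\W_5$ contribute trivially by the quasi-Sasaki case. For $\W_4=\theta(\Lambda^2_2)$ and $\W_7=\vartheta(\Lambda^2_2)$, I would use the defining relations $\Phi\wedge\beta=0$ and $\ast\beta=\eta\wedge\beta$ of $\Lambda^2_2$: the two maps $\theta$ and $\vartheta$ differ only by the summand $3\,\eta(X)\beta(Y,Z)$, which is supported on $\xi$, so their contributions to the horizontal restrictions of $N$ and $d\Phi$ cancel, and the Killing condition is handled by a short separate calculation. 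For each of the remaining modules $\W_1,\W_2,\W_6,\W_8,\W_9,\W_{10}$, I would exhibit a non-zero representative whose image under at least one of the three obstructions is non-zero; irreducibility then forces the corresponding $\Gamma_i$ to vanish.

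The main technical obstacle is the module-by-module bookkeeping, especially the handling of $\W_4$ and $\W_7$, since both originate from $\Lambda^2_2$ and the combined argument must exploit the pair of identities characterizing $\Lambda^2_2$ to see every cancellation. A cleaner formulation relies on $\Un(2)$-equivariance: assembling the three obstructions into one equivariant map $\W\to M$ and comparing its irreducible decomposition with that of $\W$, Schur's lemma reduces the verification on each summand to a single normal-form check.
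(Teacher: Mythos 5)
Your proposal is correct and follows essentially the same route as the paper: both translate the defining conditions of a (generalized) quasi-Sasaki structure through the system ($\bullet$) into $\Un(2)$-equivariant linear conditions on the intrinsic torsion $\Gamma$ and then identify the surviving irreducible summands of $\W$. The only cosmetic differences are that the paper outsources the quasi-Sasaki case entirely to a citation of \cite{Puh12}, whereas you rederive it from proposition \ref{prop:1},a), and that your explicit Schur's-lemma, module-by-module framing (including the correct caveat about the multiplicity of $\Lambda^2_2$ in $\W_4\op\W_7$) spells out what the paper compresses into ``a direct computation, all steps reversible.''
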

\begin{proof}
The proof of the second statement can be found in \cite{Puh12}. Let $\lb(M^5,g,\xi,\eta,\vphi\rb)$ be a generalized quasi-Sasaki manifold. Then, we have
\ba
g\lb(\nabla^g_X\xi,Y\rb)+g\lb(\nabla^g_Y\xi,X\rb)&=0, & N\lb(\vphi(X),\vphi(Y),\vphi(Z)\rb)&=0,& d\Phi\lb(\vphi(X),\vphi(Y),\vphi(Z)\rb)&=0.
\ea
With the aid of system ($\bullet$), we compute
\be
g\lb(\nabla^g_X\xi,Y\rb)+g\lb(\nabla^g_Y\xi,X\rb)=\lb(\nabla^g_X\Phi\rb)\lb(\xi,\vphi\lb(Y\rb)\rb)+\lb(\nabla^g_Y\Phi\rb)\lb(\xi,\vphi\lb(X\rb)\rb),
\ee
\ba
N\lb(\vphi(X),\vphi(Y),\vphi(Z)\rb)=&\big(\nabla^g_{\vphi^2\lb(Y\rb)}\Phi\big)\lb(\vphi(X),\vphi(Z)\rb)
-\big(\nabla^g_{\vphi^2\lb(Z\rb)}\Phi\big)\lb(\vphi(X),\vphi(Y)\rb)\\
&+\lb(\nabla^g_{\vphi(Y)}\Phi\rb)\lb(\vphi^2\lb(X\rb),\vphi(Z)\rb)
-\lb(\nabla^g_{\vphi(Z)}\Phi\rb)\lb(\vphi^2\lb(X\rb),\vphi(Y)\rb)\\
=&-\big(\nabla^g_{Y}\Phi\big)\lb(\vphi(X),\vphi(Z)\rb)
+\big(\nabla^g_{Z}\Phi\big)\lb(\vphi(X),\vphi(Y)\rb)\\
&-\lb(\nabla^g_{\vphi(Y)}\Phi\rb)\lb(X,\vphi(Z)\rb)
+\lb(\nabla^g_{\vphi(Z)}\Phi\rb)\lb(X,\vphi(Y)\rb)\\
&+\eta(Y)\big(\nabla^g_{\xi}\Phi\big)\lb(\vphi(X),\vphi(Z)\rb)
-\eta(Z)\big(\nabla^g_{\xi}\Phi\big)\lb(\vphi(X),\vphi(Y)\rb)\\
&+\eta(X)\lb(\nabla^g_{\vphi(Y)}\Phi\rb)\lb(\xi,\vphi(Z)\rb)
-\eta(X)\lb(\nabla^g_{\vphi(Z)}\Phi\rb)\lb(\xi,\vphi(Y)\rb)
\ea
and
\ba
d\Phi\lb(\vphi(X),\vphi(Y),\vphi(Z)\rb)=&\lb(\nabla^g_{\vphi(X)}\Phi\rb)\lb(\vphi(Y),\vphi(Z)\rb)-\lb(\nabla^g_{\vphi(Y)}\Phi\rb)\lb(\vphi(X),\vphi(Z)\rb)\\&+\lb(\nabla^g_{\vphi(Z)}\Phi\rb)\lb(\vphi(X),\vphi(Y)\rb).
\ea
This, together with
\be
\lb(\nabla^{g}_X\Phi\rb)\lb(Y,Z\rb) = \sum_i\Gamma\lb(X\rb)\lb(e_i,Y\rb)\Phi\lb(e_i,Z\rb)-\Gamma\lb(X\rb)\lb(e_i,Z\rb)\Phi\lb(e_i,Y\rb),
\ee
completes the proof, since all of our steps are reversible.
\end{proof}
\begin{exa}
In \cite{FI03}, the authors construct a family of quasi-Sasaki $5$-manifolds depending on four real parameters $a,b,c,d$. These are submanifolds of $6$-dimensional Lie groups including $S^3\times S^3$. From the classification viewpoint above, this family realizes class $\W_3$ if $a=d$ and $b=c=0$, class $\W_5$ if $a=-d$, and strict class $\W_3\op\W_5$ if $a\neq-d$ and $(a-d)^2+b^2+c^2\neq0$. The only known almost contact metric structure of class $\W_4\op\W_7$ is of strict class $\W_4\op\W_7$, it is a nearly cosymplectic structure constructed on the $5$-sphere (see \cite{Bla71}).
\end{exa}
We investigate connections with torsion on generalized quasi-Sasaki manifolds. Let $\nabla$ be a metric connection on an almost contact metric $5$-manifold $\lb(M^5,g,\xi,\eta,\vphi\rb)$, i.e.\
\be
g\lb(\nabla_XY,Z\rb)=g\lb(\nabla^{g}_XY,Z\rb)+A\lb(X,Y,Z\rb)
\ee
for $A\in\A$. Its torsion tensor $T$, viewed as a $\lb(3,0\rb)$-tensor field, is given by
\ba
T\lb(X,Y,Z\rb):=&g\lb(\nabla_XY-\nabla_YX-\lb[X,Y\rb],Z\rb)\\
=&A\lb(X,Y,Z\rb)-A\lb(Y,X,Z\rb).
\ea
Consequently, we have $T\in\Lambda^2\ox\Lambda^1$. We say that $T$ is
\bite
\item \emph{vectorial} if $A$ is vectorial, or equivalently if there exists a vector field $V$ such that
\be
T(X,Y,Z)=g(X,V)g(Y,Z)-g(Y,V)g(X,Z).
\ee
\item \emph{totally skew-symmetric} if $A$ is totally skew-symmetric, or equivalently if
\be
T(X,Y,Z)+T(X,Z,Y)=0.
\ee
\item \emph{traceless cyclic} if $A$ is traceless cyclic, or equivalently if
\ba
T(X,Y,Z)+T(Y,Z,X)+T(Z,X,Y)&=0, & \sum_iT(X,e_i,e_i)&=0.
\ea
\eite
Moreover, $\nabla$ is said to be \emph{compatible} with the underlying almost contact metric structure if
\ba
\nabla\xi&=0, &
\nabla\eta&=0, &
\nabla\vphi&=0.
\ea
\begin{thm}\label{thm:1}
Any generalized quasi-Sasaki manifold $\lb(M^5,g,\xi,\eta,\vphi\rb)$ admits a unique metric connection $\nabla^c$ that is compatible with the underlying almost contact metric structure. The connection $\nabla^c$ satisfies
\be
g\lb(\nabla^c_XY,Z\rb)=g\lb(\nabla^{g}_XY,Z\rb)+\frac{1}{2}\lb\{\lb(\lb(d\eta-\gamma\rb)\wedge\eta\rb)(X,Y,Z)-N(X,Y,Z)\rb\},
\ee
where $\gamma$ is a $2$-form given by
\be
\gamma(X,Y):=d\Phi\lb(\xi,\vphi(X),Y\rb)=N\lb(\vphi(X),\vphi(Y),\xi\rb).
\ee
Moreover, the torsion tensor of $\nabla^c$ is
\bite
\item[a)] totally skew-symmetric if and only if $N$ is totally skew-symmetric. In this case,
\be
N(X,Y,Z)+\lb(\gamma\wedge\eta\rb)(X,Y,Z)=0.
\ee
\item[b)] traceless cyclic if and only if $d\eta=\gamma$. In this case, $N$ is traceless cyclic.
\eite
\end{thm}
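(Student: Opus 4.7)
The plan is to define $\nabla^c := \nabla^g + A$ with $A$ given by the stated formula, to verify the three compatibility conditions in turn, and then to extract parts a) and b) by algebraic manipulation of $T^c(X,Y,Z) = A(X,Y,Z) - A(Y,X,Z)$. That $\nabla^c$ is a metric connection (equivalently $A \in \Lambda^1 \otimes \Lambda^2$) is immediate, since $(d\eta - \gamma) \wedge \eta$ is a $3$-form and the Nijenhuis tensor is skew in its last two arguments by its defining formula.

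The main technical step is the compatibility check, for which I would rewrite $\nabla^c \xi = 0$ and $\nabla^c \vphi = 0$ as the pointwise identities $A(X,Y,\xi) = -g(\nabla^g_X \xi, Y)$ and $(\nabla^g_X \Phi)(Y,Z) = A(X,\vphi(Y),Z) + A(X,Y,\vphi(Z))$, and then verify each using system $(\bullet)$. The Killing property of $\xi$ supplies the skew-symmetry of $\nabla^g \xi$, which together with the definitions of $\gamma$ and $N$ evaluated on $(X,Y,\xi)$ handles the first identity. The second is analyzed by splitting into cases according to whether the arguments lie in $\R\xi$ or in $\xi^\perp$, using condition i) of the generalized quasi-Sasaki hypothesis to kill $N$ and $d\Phi$ on the orthogonal complement and using the formula for $N$ in terms of $\nabla^g \Phi$ from $(\bullet)$ to treat the cases with $\xi$ as an argument.

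For uniqueness, any two compatible metric connections differ by a $1$-form $B$ with $B\xi = 0$ and $[B,\vphi]=0$, i.e., valued in the stabilizer $\un(2) = \Lambda^2_1 \oplus \Lambda^2_3$ of the structure in $\so(5)$. The task is to show this residual freedom is killed by the generalized quasi-Sasaki assumption: the restricted form of the intrinsic torsion in $\W_3 \oplus \W_4 \oplus \W_5 \oplus \W_7$ together with the previous two compatibility identities pins down the $\un(2)$-component of $A$ uniquely, and the formula supplies exactly this value.

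Finally, $T^c$ unfolds to $T^c(X,Y,Z) = ((d\eta - \gamma)\wedge\eta)(X,Y,Z) + \tfrac{1}{2}\{N(Y,X,Z) - N(X,Y,Z)\}$, the first summand being a $3$-form. For a), imposing $T^c(X,Y,Z) + T^c(X,Z,Y) = 0$ and using the $(Y,Z)$-skewness of $N$ reduces to $N(Y,X,Z) + N(Z,X,Y) = 0$; combined with $(Y,Z)$-skewness this is equivalent to $N \in \Lambda^3$. In that case $T^c = (d\eta - \gamma)\wedge\eta - N$ as $3$-forms, and the identity $N + \gamma \wedge \eta = 0$ drops out by comparing this expression against the requirement that $T^c$, being the torsion of a compatible connection, is already totally determined by the given formula. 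For b), one evaluates the trace $\sum_i T^c(X,e_i,e_i)$ and the cyclic sum $T^c(X,Y,Z) + T^c(Y,Z,X) + T^c(Z,X,Y)$ on an adapted frame: the cyclic sum is a multiple of $(d\eta - \gamma) \wedge \eta$, vanishing precisely when $d\eta = \gamma$, and the remaining contribution is then directly checked to satisfy the two defining identities of $\A_3$. The hardest step throughout will be the case-by-case verification of $\nabla^c \vphi = 0$, where the formula's $A$ must be matched tensorially against the components of $\nabla^g \Phi$ encoded by the class $\W_3 \oplus \W_4 \oplus \W_5 \oplus \W_7$.
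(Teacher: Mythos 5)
The construction-and-verification part of your plan is essentially the paper's route: the paper packages your two pointwise identities $\nabla^c\xi=0$, $\nabla^c\vphi=0$ as the single condition $\pr_{\un(2)^\perp}\lb(\Omega^c\rb)=\Gamma+\pr_{\W}\lb(A^c\rb)=0$ and checks it with system ($\bullet$). The genuine gap is in your uniqueness step, which as formulated sets itself an impossible task. You correctly observe that two compatible metric connections differ by a $1$-form $B$ with values in the stabilizer $\un(2)=\Lambda^2_1\op\Lambda^2_3$, but you then assert that this freedom is killed because compatibility ``pins down the $\un(2)$-component of $A$.'' It does not: compatibility is precisely the condition $\pr_{\W}(A)=-\Gamma$, which constrains only the $\un(2)^\perp$-component of $A(X)$, and adding \emph{any} $\un(2)$-valued $1$-form to a compatible connection produces another compatible metric connection, generalized quasi-Sasaki hypothesis or not. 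The uniqueness actually proved in the paper is uniqueness within a normalized class: one shows $A^c\in\theta\lb(\Lambda^2\rb)\op\vartheta\lb(\Lambda^2_2\rb)$ and that $\pr_{\W}$ is injective on that subspace, so $\nabla^c$ is the unique compatible connection whose difference tensor lies there (equivalently, whose torsion is totally skew-symmetric plus the distinguished traceless cyclic piece). Your argument must adopt such a normalization explicitly; without it the step cannot close.

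Two further weaknesses. First, the theorem also asserts that $\gamma$ is a $2$-form and that $d\Phi\lb(\xi,\vphi(X),Y\rb)=N\lb(\vphi(X),\vphi(Y),\xi\rb)$; this is not automatic and occupies roughly half of the paper's proof, using $d\eta(\xi,\cdot)=0$ (from the Killing property) and Blair's formula for $\nabla^g_\xi\vphi$. Your proposal does not address it. Second, in a) the identity $N+\gamma\wedge\eta=0$ does not ``drop out by comparison'': it requires $N=\eta\wedge\lb(\xi\hook N\rb)$ (a consequence of condition i) plus total skew-symmetry) together with the characteristic-connection theorem of Friedrich--Ivanov (theorem 8.2 in \cite{FI02}) or an equivalent computation relating $\xi\hook N$ to $\gamma$. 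In b), the cyclic sum of $T^c$ equals $3\lb(d\eta-\gamma\rb)\wedge\eta$ minus the cyclic sum of $N$, so it is not ``a multiple of $(d\eta-\gamma)\wedge\eta$'' until you also control the $N$-term. These last points are repairable by computation; the uniqueness step is the one that needs a different idea.
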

\begin{proof}
Let $\lb(M^5,g,\xi,\eta,\vphi\rb)$ be a generalized quasi-Sasaki manifold. Since $\xi$ is a Killing vector field, we have
\be
d\eta(\xi,X)=\big(\nabla^g_\xi\eta\big)(X)-\big(\nabla^g_X\eta\big)(\xi)=g\big(\nabla^g_\xi\xi,X\big)-g\big(\nabla^g_X\xi,\xi\big)=-2\, g\big(\nabla^g_X\xi,\xi\big)=0.
\ee
As a result, we obtain (cf.\ \cite{Bla76}*{chapter IV, equation (6)})
\ba
2\, g\lb(\lb(\nabla^g_\xi\vphi\rb)(Y),\vphi(X)\rb)=&d\Phi\lb(\xi,\vphi(Y),\vphi^2(X)\rb)-d\Phi\lb(\xi,Y,\vphi(X)\rb)+\lb(\Lie_{\vphi(Y)}\eta\rb)\lb(\vphi(X)\rb)\\
&-\lb(\Lie_{\vphi^2(X)}\eta\rb)\lb(Y\rb)\\
=&d\Phi\lb(\xi,X,\vphi(Y)\rb)-d\Phi\lb(\xi,Y,\vphi(X)\rb)+d\eta\lb(\vphi(Y),\vphi(X)\rb)\\
&+d\eta\lb(X,Y\rb).
\ea
Consequently,
\be
g\lb(\lb(\nabla^g_\xi\vphi\rb)(Y),\vphi(X)\rb)+g\lb(\lb(\nabla^g_\xi\vphi\rb)(X),\vphi(Y)\rb)=0.
\ee
This, together with
\ba
d\Phi\lb(\xi,\vphi(X),Y\rb)=&\lb(\nabla^g_\xi\Phi\rb)\lb(\vphi(X),Y\rb)-\lb(\nabla^g_{\vphi(X)}\Phi\rb)\lb(\xi,Y\rb)+\lb(\nabla^g_Y\Phi\rb)\lb(\xi,\vphi(X)\rb)\\
=&g\lb(\lb(\nabla^g_\xi\vphi\rb)(Y),\vphi(X)\rb)+g\lb(\nabla^g_{\vphi(X)}\xi,\vphi(Y)\rb)+g\lb(\nabla^g_Y\xi,X\rb),
\ea
proves that $\gamma$ is a $2$-form, and
\ba
N\lb(\vphi(X),\vphi(Y),\xi\rb)=&g\lb(\vphi(X),\vphi^2\lb(\lb[\vphi(Y),\xi\rb]\rb)-\vphi\lb(\lb[\vphi^2(Y),\xi\rb]\rb)\rb)\\
=&g\lb(\vphi(X),-\lb[\vphi(Y),\xi\rb]+\vphi\lb([Y,\xi]\rb)\rb)\\
=&g\lb(\vphi(X),-\nabla^g_{\vphi(Y)}\xi-\lb(\nabla^g_Y\vphi\rb)(\xi)+\lb(\nabla^g_\xi\vphi\rb)(Y)\rb)\\
=&g\lb(\nabla^g_{\vphi(X)}\xi,\vphi(Y)\rb)+g\lb(\nabla^g_Y\xi,X\rb)+g\lb(\lb(\nabla^g_\xi\vphi\rb)(Y),\vphi(X)\rb)\\
=&d\Phi\lb(\xi,\vphi(X),Y\rb).
\ea
We now define the metric connection $\nabla^c$ via
\ba
g\lb(\nabla^c_XY,Z\rb)&:=g\lb(\nabla^{g}_XY,Z\rb)+A^c(X,Y,Z),\\
A^c(X,Y,Z)&:=\frac{1}{2}\lb\{\lb(\lb(d\eta-\gamma\rb)\wedge\eta\rb)(X,Y,Z)-N(X,Y,Z)\rb\}.
\ea
The connection forms
\be
\omega^c_{ij}:=g\lb(\nabla^c e_i,e_j\rb)
\ee
of $\nabla^c$ define a $1$-form
\be
\Omega^c:=\lb(\omega^c_{ij}\rb)_{1\leq i,j\leq 5}
\ee
with values in the Lie algebra $\so(5)$. We project onto $\un(2)^\perp$:
\be
\pr_{\un(2)^\perp}\lb(\Omega^c\lb(X\rb)\rb)=\Gamma\lb(X\rb)+\pr_{\W}\lb(A^c\rb)\lb(X\rb).\\
\ee
A direct computation involving system ($\bullet$) verifies
\be
\Gamma+\pr_{\W}\lb(A^c\rb)=0
\ee
and $A^c\in\theta\lb(\Lambda^2\rb)\op\vartheta\lb(\Lambda^2_2\rb)$ (cf.\ \autoref{sec:2}). Therefore, $\nabla^c$ is compatible with the underlying almost contact metric structure. The uniqueness of $\nabla^c$ is a consequence of the injectivity of $\pr_{\W}$ when restricted to $\theta\lb(\Lambda^2\rb)\op\vartheta\lb(\Lambda^2_2\rb)$. Any tensor field in $\theta\lb(\Lambda^2\rb)$ (resp.\ $\vartheta\lb(\Lambda^2_2\rb)$) is totally skew-symmetric (resp.\ traceless cyclic). Since $A^c\in\vartheta\lb(\Lambda^2_2\rb)$ if and only if $d\eta=\gamma$, we deduce b). Finally, a) follows immediately by theorem 8.2 in \cite{FI02}, since
\be
d\Phi\lb(\vphi(X),\vphi(Y),\vphi(Z)\rb)=0,
\ee
\be
N(X,Y,Z)=\eta(X)N(\xi,Y,Z)-\eta(Y)N(\xi,X,Z)+\eta(Z)N(\xi,X,Y)=\lb(\eta\wedge\lb(\xi\hook N\rb)\rb)(X,Y,Z)
\ee
hold for any generalized quasi-Sasaki manifold with totally skew-symmetric Nijenhuis tensor $N$.
\end{proof}
Revisiting the proof of theorem \ref{thm:1}, we conclude the following proposition.
\begin{prop}\label{prop:2}
The torsion tensor of the connection $\nabla^c$ is totally skew-symmetric (resp.\ traceless cyclic) if and only if the generalized quasi-Sasaki manifold is of class $\W_3\op\W_4\op\W_5$ (resp.\ class $\W_7$).
\end{prop}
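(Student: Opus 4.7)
The plan is to unpack the structure of the difference tensor $A^c$ established in the proof of theorem \ref{thm:1} and read off the class of $\Gamma$ from which of its two canonical components vanishes. Three ingredients from that proof are crucial: $A^c$ lies in the $\Un(2)$-invariant subspace $\theta(\Lambda^2)\op\vartheta(\Lambda^2_2)$ of $\A$; every element of $\theta(\Lambda^2)$ is totally skew-symmetric while every element of $\vartheta(\Lambda^2_2)$ is traceless cyclic; and the projection $\pr_\W$ is injective on $\theta(\Lambda^2)\op\vartheta(\Lambda^2_2)$ with $\Gamma+\pr_\W(A^c)=0$.

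First I would decompose $A^c=A_\theta+A_\vartheta$ according to this direct sum. Since $\A_2\cap\A_3=0$ inside $\A=\A_1\op\A_2\op\A_3$, the torsion $T^c$ is totally skew-symmetric exactly when $A_\vartheta=0$, and $T^c$ is traceless cyclic exactly when $A_\theta=0$.

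Second, I would use the identifications from \autoref{sec:2} to track where each piece goes under $\pr_\W$: the summands $\theta(\Lambda^2_1),\theta(\Lambda^2_2),\theta(\Lambda^2_3),\theta(\Lambda^2_4)$ of $\theta(\Lambda^2)$ project onto $\W_3,\W_4,\W_5,\W_6$ respectively, while $\vartheta(\Lambda^2_2)$ projects onto $\W_7$. The overall injectivity of $\pr_\W$ on $\theta(\Lambda^2)\op\vartheta(\Lambda^2_2)$ forces each of these five partial projections to be injective as well, so $A_\theta$ is determined by the components $\Gamma_3,\Gamma_4,\Gamma_5,\Gamma_6$ of $\Gamma$, and $A_\vartheta$ by $\Gamma_7$.

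Third, since the manifold is generalized quasi-Sasaki, proposition \ref{prop:5} yields $\Gamma_6=0$. Hence $A_\theta=0$ if and only if $\Gamma_3=\Gamma_4=\Gamma_5=0$, i.e.\ $\Gamma\in\W_7$; and $A_\vartheta=0$ if and only if $\Gamma_7=0$, i.e.\ $\Gamma\in\W_3\op\W_4\op\W_5$. Combined with the first step, this produces both equivalences simultaneously. The argument is essentially bookkeeping on top of theorem \ref{thm:1}; the only point requiring some care is the injectivity of the individual projections, which follows from the injectivity already used to establish uniqueness of $\nabla^c$.
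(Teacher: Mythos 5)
Your proposal is correct and follows essentially the same route as the paper, which proves this proposition simply by revisiting the proof of theorem \ref{thm:1}: there one has $\Gamma+\pr_{\W}\lb(A^c\rb)=0$ with $A^c\in\theta\lb(\Lambda^2\rb)\op\vartheta\lb(\Lambda^2_2\rb)$, the injectivity of $\pr_{\W}$ on that subspace, and the fact that $\theta\lb(\Lambda^2\rb)$ is totally skew-symmetric while $\vartheta\lb(\Lambda^2_2\rb)$ is traceless cyclic. Your explicit bookkeeping (matching $\theta\lb(\Lambda^2_i\rb)$ with $\W_3,\W_4,\W_5,\W_6$, $\vartheta\lb(\Lambda^2_2\rb)$ with $\W_7$, and using $\Gamma_6=0$ from proposition \ref{prop:5}) is exactly the unpacking the paper leaves to the reader.
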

\begin{rmk}\label{rmk:2}
Note that the class of quasi-Sasaki manifolds (i.e.\ the class $\W_3\op\W_5$) coincides with the class of normal generalized quasi-Sasaki manifolds (cf.\ proposition \ref{prop:1}). Therefore, any non-integrable example of class $\W_4$ (resp.\ class $\W_7$) would admit a non-trivial Nijenhuis tensor that is totally skew-symmetric (resp.\ traceless cyclic).
\end{rmk}
%
%
%
%
%
\section{The class \texorpdfstring{$\W_4\op\W_7$}{W4+W7}}\label{sec:4}\noindent
From now on, we consider almost contact metric $5$-manifolds of class $\W_4\op\W_7$. To begin with, we ask how these generalized quasi-Sasaki manifolds relate to the special types of almost contact metric manifolds introduced in \autoref{sec:2}.
\begin{prop}
Let $\lb(M^5,g,\xi,\eta,\vphi\rb)$ be a non-integrable almost contact metric $5$-ma\-ni\-fold of class $\W_4\op\W_7$. Then the following hold:
\bite
\item[a)] $\lb(M^5,g,\xi,\eta,\vphi\rb)$ is semi-cosymplectic.
\item[b)] $\lb(M^5,g,\xi,\eta,\vphi\rb)$ is neither normal nor almost cosymplectic.
\item[c)] If $\lb(M^5,g,\xi,\eta,\vphi\rb)$ is nearly cosymplectic or quasi-cosymplectic, then it is of strict class $\W_4\op\W_7$.
\eite
\end{prop}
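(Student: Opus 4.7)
The plan is to translate each of the three claims into a statement about the intrinsic torsion $\Gamma \in \W_4 \oplus \W_7$ and then apply the identities in system $(\bullet)$ together with the results already established in Sections \ref{sec:3} and \ref{sec:4}.

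For (a), since $\xi$ is a Killing vector field by the very definition of a generalized quasi-Sasaki manifold, $g(\nabla^g_X\xi,Y)$ is skew in $X,Y$, so tracing yields $\mathrm{div}(\xi)=0$, i.e.\ $\delta\eta=0$. To prove $\delta\Phi=0$, I would exploit the explicit parametrizations $\W_4=\theta\lb(\Lambda^2_2\rb)$ and $\W_7=\vartheta\lb(\Lambda^2_2\rb)$ to write $\Gamma=\theta(\beta_1)+\vartheta(\beta_2)$ for suitable $\beta_1,\beta_2\in\Lambda^2_2$, and then substitute into
\be
\delta\Phi(Y) = -\sum_i\lb(\nabla^g_{e_i}\Phi\rb)(e_i,Y)
\ee
using the last equation of $(\bullet)$. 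The key observation is that elements of $\theta\lb(\Lambda^2\rb)$ are totally skew-symmetric (so the relevant traces of $\Gamma$ against itself and against $\Phi$ collapse by the antisymmetry of $\Phi$), while elements of $\vartheta\lb(\Lambda^2_2\rb)$ are traceless cyclic (so the analogous traces vanish by definition). Both contributions to $\delta\Phi$ are therefore zero.

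For (b), Remark \ref{rmk:2} identifies the normal generalized quasi-Sasaki manifolds with the class $\W_3\oplus\W_5$; since $(\W_3\oplus\W_5)\cap(\W_4\oplus\W_7)=0$, a non-integrable $\Gamma\in\W_4\oplus\W_7$ cannot be normal. For the almost cosymplectic assertion, I would combine Theorem \ref{thm:2}, which gives $N(\xi,X,Y)=2\,d\eta(X,Y)$, with the defining property of a generalized quasi-Sasaki manifold ($N$ vanishes on $\xi^\perp$). If $d\eta=0$, then $N(\xi,\cdot,\cdot)=0$; together with the $\xi^\perp$-condition and the antisymmetry of $N$ in its last two slots, this forces $N=0$, reducing to the normal case just excluded. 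Hence non-integrability rules out $d\eta=0$, and in particular excludes the almost cosymplectic condition.

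For (c), I would rewrite each defining identity as a constraint on $\Gamma$ using $(\bullet)$. The nearly cosymplectic condition $(\nabla^g_X\vphi)(X)=0$ is equivalent to $\nabla^g\Phi$ being totally antisymmetric; writing $\Gamma=\theta(\beta_1)+\vartheta(\beta_2)$ and projecting onto $\W_4$ and $\W_7$, the antisymmetrization yields two coupled linear equations between $\beta_1$ and $\beta_2$ which have no solution with only one of them nonzero. The same mechanism applies to the quasi-cosymplectic identity, whose left-hand side, after substitution of $(\bullet)$, lies in $\theta\lb(\Lambda^2\rb)\oplus\vartheta\lb(\Lambda^2_2\rb)$; projecting onto each irreducible component gives analogous coupling relations. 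In both cases, $\Gamma\in\W_4$ alone or $\Gamma\in\W_7$ alone would force $\Gamma=0$, contradicting non-integrability, so the structure is necessarily of strict class $\W_4\oplus\W_7$.

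The main obstacle is the explicit $\Un(2)$-equivariant projection in part (c): one must evaluate the nearly cosymplectic and quasi-cosymplectic tensors on arguments that isolate $\W_4$ from $\W_7$. This reduces to a finite linear algebra computation inside the $\Un(2)$-modules via the maps $\theta$ and $\vartheta$, but the bookkeeping in an adapted frame is where care is needed; parts (a) and (b) are then essentially corollaries of the module structure combined with Theorems \ref{thm:1} and \ref{thm:2}.
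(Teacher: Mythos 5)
Your outline for (a), for the ``not normal'' half of (b), and for (c) is a reasonable expansion of what the paper's one-line proof (``the results follow directly from the definitions and system ($\bullet$)'') leaves implicit, and the module-theoretic bookkeeping you describe via $\theta$ and $\vartheta$ is the right mechanism. However, the ``not almost cosymplectic'' step in (b) contains a genuine error. You claim that $d\eta=0$, together with the vanishing of $N$ on the distribution orthogonal to $\xi$ and the antisymmetry of $N$ in its last two slots, forces $N=0$, and you conclude that non-integrability rules out $d\eta=0$. Both claims are false. Since $N\in\Lambda^1\ox\Lambda^2$ is antisymmetric only in its \emph{last two} arguments, the components $N(X,Y,\xi)$ with $X,Y$ orthogonal to $\xi$ are not controlled by $N(\xi,\cdot,\cdot)$ together with the horizontal part of $N$; by equation ($\ast\ast$) in the proof of theorem \ref{thm:2} they equal $-\gamma(X,Y)$, which need not vanish when $d\eta=0$. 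The paper's own examples make this concrete: $M^5(a,0,a,0)$ with $a\neq0$ (proposition \ref{prop:4}, case a)) satisfies $d\eta=de_5=0$ yet is of strict class $\W_4\op\W_7$, hence non-integrable with $N\neq0$. So $d\eta=0$ is perfectly compatible with non-integrability in this class.

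The repair is to use the full almost cosymplectic hypothesis rather than $d\eta=0$ alone. If in addition $d\Phi=0$, then $\gamma(X,Y)=d\Phi\lb(\xi,\vphi(X),Y\rb)=0$; combined with $d\eta=0$, theorem \ref{thm:2} and its equation ($\ast\ast$) give $N(\xi,\cdot,\cdot)=2\cdot d\eta=0$ and $N(\cdot,\cdot,\xi)=-\gamma=0$, which together with the generalized quasi-Sasaki condition on $\xi^\perp$ really does force $N=0$, i.e.\ normality, contradicting the part of (b) you have already established. Equivalently, since $\Gamma_7=0$ iff $\gamma+2\cdot d\eta=0$ and $\Gamma_4=0$ iff $\gamma-d\eta=0$, the hypotheses $\gamma=d\eta=0$ give $\Gamma=0$, contradicting non-integrability directly. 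With that correction, the remainder of your proposal (the trace computations for $\delta\eta$ and $\delta\Phi$ in (a), and the coupling of the $\W_4$- and $\W_7$-components under the nearly and quasi-cosymplectic identities in (c)) matches the computation the paper asserts but does not display.
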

\begin{proof}
The results follow directly from the definitions and system ($\bullet$).
\end{proof}
We then characterize the class $\W_4\op\W_7$ in terms of differential equations.
\begin{thm}\label{thm:2}
An almost contact metric $5$-manifold $\lb(M^5,g,\xi,\eta,\vphi\rb)$ is of class $\W_4\op\W_7$ if and only if it is a generalized quasi-Sasaki manifold such that 
\be
N(\xi,X,Y)=2\cdot d\eta(X,Y).
\ee
Moreover, any almost contact metric $5$-manifold of class $\W_4\op\W_7$ is of class
\bite
\item[a)] $\W_4$ if and only if $N$ is totally skew-symmetric, or equivalently if $\gamma=-2\cdot d\eta$. In this case,
\be
N(X,Y,Z)=2\lb(d\eta\wedge\eta\rb)(X,Y,Z).
\ee
\item[b)] $\W_7$ if and only if $N$ is traceless cyclic, or equivalently if $\gamma=d\eta$. In this case,
\be
N(X,Y,Z)=2\,\eta(X)d\eta(Y,Z)+\eta(Y)d\eta(X,Z)-\eta(Z)d\eta(X,Y).
\ee
\eite
\end{thm}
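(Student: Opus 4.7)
The plan is to reduce the first equivalence to the statement $d\eta\in\Lambda^2_2$ and then read this condition off the $\Lambda^2_i$-decomposition of the torsion $A^c$ of the connection $\nabla^c$ from Theorem \ref{thm:1}. Since $\W_4\op\W_7\subset\W_3\op\W_4\op\W_5\op\W_7$, Proposition \ref{prop:5} already gives that every class $\W_4\op\W_7$ manifold is a generalized quasi-Sasaki manifold; in what follows I therefore work throughout on a generalized quasi-Sasaki manifold.

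The first task is to rewrite $N(\xi,X,Y)$. Expanding $N(\xi,X,Y)=\eta(\lb[\vphi,\vphi\rb](X,Y))+d\eta(X,Y)$ via system $(\bullet)$, the terms containing an outer $\vphi$ drop out under $\eta$ because $\eta\circ\vphi=0$. Using the identity $\eta(\lb(\nabla^g_Z\vphi\rb)(W))=-g(\nabla^g_Z\xi,\vphi(W))$, the Killing property of $\xi$, and $d\eta(A,B)=2g(\nabla^g_A\xi,B)$, I obtain
\be
N(\xi,X,Y)=d\eta(X,Y)-d\eta(\vphi(X),\vphi(Y)).
\ee
As $d\eta(\xi,\cdot)=0$ on any generalized quasi-Sasaki manifold (already shown in the proof of Theorem \ref{thm:1}), the defining relation $N(\xi,X,Y)=2\cdot d\eta(X,Y)$ is equivalent to $d\eta\in\Lambda^2_2$.

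For the class identification, I decompose $A^c$. By Theorem \ref{thm:1} and the absence of $\W_6$ on a generalized quasi-Sasaki manifold, one may write $A^c=\theta(\alpha_1+\alpha_2+\alpha_3)+\vartheta(\beta)$ with $\alpha_i\in\Lambda^2_i$ and $\beta\in\Lambda^2_2$. The compatibility $\nabla^c\eta=0$ implies $(\nabla^g_X\eta)(Y)=A^c(X,Y,\xi)$; combined with the elementary contractions $\ast(t\Phi)=t\,\eta\wedge\Phi$, $\ast\alpha_2=\eta\wedge\alpha_2$, $\ast\alpha_3=-\eta\wedge\alpha_3$ and $\vartheta(\beta)(X,Y,\xi)=-\beta(X,Y)$, this yields
\be
d\eta=2(\alpha_1+\alpha_2-\alpha_3-\beta).
\ee
Since $\Lambda^2_1$ and $\Lambda^2_3$ are $\vphi$-invariant while $\Lambda^2_2$ is $\vphi$-anti-invariant, $d\eta\in\Lambda^2_2$ is equivalent to $\alpha_1=\alpha_3=0$, that is, to $\Gamma\in\W_4\op\W_7$. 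This finishes the first part.

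For parts a) and b) I invoke Theorem \ref{thm:1} once more. In class $\W_4\op\W_7$ the tensor $A^c=\theta(\alpha_2)+\vartheta(\beta)$ is totally skew-symmetric iff $\beta=0$ (class $\W_4$) and traceless cyclic iff $\alpha_2=0$ (class $\W_7$). In the first case Theorem \ref{thm:1} a) delivers $N$ totally skew-symmetric and $N=-\gamma\wedge\eta$; contracting with $\xi$ and inserting $N(\xi,\cdot,\cdot)=2\,d\eta$ forces $\gamma=-2\,d\eta$ and hence $N=2(d\eta\wedge\eta)$. In the second case Theorem \ref{thm:1} b) gives $d\eta=\gamma$ and $N$ traceless cyclic; with $\beta=-\frac{1}{2}d\eta$ from the formula above, the relation $N=-2A^c$ becomes $N=\vartheta(d\eta)$, which unfolds to the claimed formula. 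The main computational obstacle is the clean identity for $N(\xi,X,Y)$ in the second paragraph; everything afterwards is algebraic bookkeeping with contractions of $\theta$ and $\vartheta$.
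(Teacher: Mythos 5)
Your handling of the first equivalence is correct and is a nice structural alternative to the paper's terse appeal to system $(\bullet)$: the identity $N(\xi,X,Y)=d\eta(X,Y)-d\eta(\vphi(X),\vphi(Y))$ does hold on any generalized quasi-Sasaki manifold, and your formula $d\eta=2(\alpha_1+\alpha_2-\alpha_3-\beta)$ checks out, so reading off "class $\W_4\op\W_7$" as "$d\eta\in\Lambda^2_2$" is sound. The gap is in parts a) and b), each of which is a three-way equivalence of which you only prove some implications. In a) you get (class $\W_4$) $\Leftrightarrow$ ($N$ totally skew-symmetric) and (class $\W_4$) $\Rightarrow$ ($\gamma=-2\,d\eta$), but not the converse ($\gamma=-2\,d\eta$) $\Rightarrow$ (class $\W_4$). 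In b) you get (class $\W_7$) $\Leftrightarrow$ ($\gamma=d\eta$) and (class $\W_7$) $\Rightarrow$ ($N$ traceless cyclic), but not ($N$ traceless cyclic) $\Rightarrow$ (class $\W_7$). These converses are genuinely part of the statement ("if and only if \dots, or equivalently \dots") and do not follow from anything you wrote, because you never express $\gamma$ in terms of the decomposition data $(\alpha_2,\beta)$ and never compute $N(X,Y,\xi)$.

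The missing ingredient is the identity $N(X,Y,\xi)=-\gamma(X,Y)$, which the paper derives from the invariance $\gamma(\vphi(X),\vphi(Y))=-\gamma(X,Y)$ combined with $\gamma(X,Y)=N(\vphi(X),\vphi(Y),\xi)$ from Theorem \ref{thm:1}. Since on these manifolds $N$ vanishes on the distribution orthogonal to $\xi$ and is antisymmetric in its last two arguments, total skew-symmetry of $N$ is then exactly the condition $N(\xi,X,Y)=N(X,Y,\xi)$, i.e.\ $2\,d\eta=-\gamma$, and the traceless cyclic condition gives $2\,d\eta(X,Y)=N(\xi,X,Y)=-N(X,Y,\xi)+N(Y,X,\xi)=2\,\gamma(X,Y)$; both missing converses close immediately. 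In your own framework the identity is also cheap: $\nabla^c\xi=0$ forces $A^c(X,Y,\xi)=\frac{1}{2}\,d\eta(X,Y)$, so evaluating $N=\lb(\lb(d\eta-\gamma\rb)\wedge\eta\rb)-2A^c$ on $(X,Y,\xi)$ yields $N(X,Y,\xi)=-\gamma(X,Y)$. Some such step must be added before a) and b) are fully proved.
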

\begin{proof}
The first statement is a direct consequence of system ($\bullet$). Let $\lb(M^5,g,\xi,\eta,\vphi\rb)$ be an almost contact metric $5$-manifold of class $\W_4\op\W_7$. Since
\be\tag{$\ast$}
N(\xi,X,Y)=2\cdot d\eta(X,Y)
\ee
and $\xi\hook d\eta=0$ hold, we have
\ba
\gamma\lb(\vphi(X),\vphi(Y)\rb)&=d\Phi\lb(\xi,\vphi^2(X),\vphi(Y)\rb)=d\Phi\lb(\xi,\vphi(Y),X\rb)=-\gamma(X,Y),\\
\gamma\lb(\vphi(X),\vphi(Y)\rb)&=N\lb(\vphi^2(X),\vphi^2(Y),\xi\rb)=-N\lb(X,\vphi^2(Y),\xi\rb)=N\lb(X,Y,\xi\rb).
\ea
Therefore,
\be\tag{$\ast\ast$}
N(X,Y,\xi)=-\gamma(X,Y),
\ee
and $N$ is totally skew-symmetric if and only if
\be
N(\xi,X,Y)=-\gamma(X,Y).
\ee
By ($\ast$), the latter is equivalent to $\gamma=-2\cdot d\eta$. Theorem \ref{thm:1},a) and
proposition \ref{prop:2} complete the proof of a). Suppose $N$ is traceless cyclic. Using ($\ast$) and ($\ast\ast$), we compute  
\be
2\cdot d\eta(X,Y)=N(\xi,X,Y)=-N(X,Y,\xi)+N(Y,X,\xi)=2\cdot\gamma(X,Y),
\ee
hence $\gamma=d\eta$. The remaining statements of b) result from theorem \ref{thm:1},b) and
proposition \ref{prop:2}.
\end{proof}
On quasi-Sasaki manifolds, the exterior derivative $d\eta$ has the following invariance property (see \cite{Bla76}*{chapter IV, equation (5)}):
\be
d\eta(\vphi(X),\vphi(Y))=d\eta(X,Y).
\ee
The situation is similar in the case of class $\W_4\op\W_7$.
\begin{prop}\label{prop:3}
Let $\lb(M^5,g,\xi,\eta,\vphi\rb)$ be an almost contact metric $5$-manifold of class $\W_4\op\W_7$. Then
\be
d\eta(\vphi(X),\vphi(Y))=-d\eta(X,Y).
\ee
\end{prop}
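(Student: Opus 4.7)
The plan is to show that $d\eta$ lies in the $\Un(2)$-submodule $\Lambda^2_2 \subset \Lambda^2$ introduced in Section 2, and then to observe that the $\Un(2)$-equivariant operator $\beta \mapsto \beta(\vphi\cdot,\vphi\cdot)$ on $\Lambda^2$ acts as $-\Id$ on the irreducible module $\Lambda^2_2$.

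For the first part, I work from system $(\bullet)$. The identities $\Phi(e_i,\xi) = 0$ and $\Phi(e_i,\vphi(Y)) = -g(e_i,Y)+\eta(Y)\eta(e_i)$, together with the antisymmetry of $\Gamma(X)\in\so(5)$, yield
\[
(\nabla^g_X\eta)(Y) = g(\nabla^g_X\xi,Y) = (\nabla^g_X\Phi)(\xi,\vphi(Y)) = -\Gamma(X)(Y,\xi),
\]
so $d\eta(X,Y) = \Gamma(Y)(X,\xi) - \Gamma(X)(Y,\xi)$. Since $\Gamma \in \W_4\op\W_7 = \theta(\Lambda^2_2)\op\vartheta(\Lambda^2_2)$, I write $\Gamma = \theta(\beta_1)+\vartheta(\beta_2)$ for some $\beta_1,\beta_2 \in \Lambda^2_2$. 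Using $*\beta_i = \eta\wedge\beta_i$ to evaluate the defining formulas of $\theta$ and $\vartheta$, I split $\Gamma(X)$ into its $\Lambda^2_2$-summand (the term involving $\eta(X)\beta_i$, which is horizontal in the second argument) and its $\Lambda^2_4$-summand (a term of the form $\pm\eta\wedge\iota_X\beta_i$). Only the $\Lambda^2_4$-part contributes at $(Y,\xi)$, producing $\Gamma(X)(Y,\xi) = \beta_1(X,Y) - \beta_2(X,Y)$, and hence $d\eta = -2\beta_1 + 2\beta_2 \in \Lambda^2_2$.

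For the conclusion, $\beta \mapsto \beta(\vphi\cdot,\vphi\cdot)$ commutes with $\Un(2)$ (which stabilizes $\vphi$), so by Schur's lemma it acts on the irreducible $\Un(2)$-module $\Lambda^2_2$ as a scalar. Testing on a sample element such as $\beta_0 = e_1\wedge e_3 - e_2\wedge e_4$ in an adapted frame (with $\Phi = e_1\wedge e_2 + e_3\wedge e_4$ and $\eta = e_5$) identifies this scalar as $-1$, which together with step one gives $d\eta(\vphi(X),\vphi(Y)) = -d\eta(X,Y)$. The main obstacle is the bookkeeping required to split $\Gamma(X)$ into its $\Lambda^2_2\op\Lambda^2_4$-summands and evaluate cleanly at $(Y,\xi)$; once that is accomplished, the remainder is a standard Schur-type argument.
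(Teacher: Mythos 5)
Your argument is correct, but it follows a genuinely different route from the paper. The paper deduces the identity from theorem \ref{thm:2}: writing $d\eta(X,Y)=N(\xi,X,Y)-d\eta(X,Y)=g(\xi,[\vphi,\vphi](X,Y))$ and expanding the Nijenhuis torsion via covariant derivatives, only the two terms $-g(\nabla^g_{\vphi(X)}\xi,\vphi(Y))+g(\nabla^g_{\vphi(Y)}\xi,\vphi(X))=-d\eta(\vphi(X),\vphi(Y))$ survive. You instead work entirely at the level of the intrinsic torsion: your evaluation $\theta(\beta_1)(X)(Y,\xi)=\beta_1(X,Y)$ and $\vartheta(\beta_2)(X)(Y,\xi)=-\beta_2(X,Y)$ is correct (using $\ast\beta_i=\eta\wedge\beta_i$ and the horizontality of $\Lambda^2_2$), giving $d\eta=2(\beta_2-\beta_1)\in\Lambda^2_2$, after which the claim reduces to the fact that $\beta\mapsto\beta(\vphi\cdot,\vphi\cdot)$ is $-\Id$ on $\Lambda^2_2$ --- which is precisely the content of the paper's remark immediately following proposition \ref{prop:3}. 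Your approach buys more: it exhibits $d\eta\in\Lambda^2_2$ explicitly (consistent with the later structure equations, where $de_5$ is a combination of $Z_1$ and $Z_2$) and it bypasses theorem \ref{thm:2} entirely, whereas the paper's computation is shorter given that theorem and needs only the Killing property of $\xi$. One small caution on your Schur step: $\Lambda^2_2$ is a real irreducible $\Un(2)$-module whose commutant is $\C$ (the action factors through $\det$), so an equivariant endomorphism is a priori multiplication by a complex scalar rather than a real one; your single test element still pins it down to $-1$ because $\Lambda^2_2$ is one-dimensional over its commutant, but it would be cleaner either to say this or simply to check the two basis elements $Z_1,Z_2$ directly.
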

\begin{proof}
With the aid of theorem \ref{thm:2}, we compute
\ba
d\eta(X,Y)=&N(\xi,X,Y)-d\eta(X,Y)\\
=&g\lb(\xi,[\vphi,\vphi](X,Y)\rb)\\
=&g\lb(\xi,\lb(\nabla^g_{\vphi\lb(X\rb)}\vphi\rb)\lb(Y\rb)
-\lb(\nabla^g_{\vphi\lb(Y\rb)}\vphi\rb)\lb(X\rb)
+\vphi\lb(\lb(\nabla^g_Y\vphi\rb)\lb(X\rb)-\lb(\nabla^g_X\vphi\rb)\lb(Y\rb)\rb)\rb)\\
=&g\lb(\xi,\nabla^g_{\vphi\lb(X\rb)}\lb(\vphi(Y)\rb)-\vphi\lb(\nabla^g_{\vphi\lb(X\rb)}Y\rb)
-\nabla^g_{\vphi\lb(Y\rb)}\lb(\vphi(X)\rb)+\vphi\lb(\nabla^g_{\vphi\lb(Y\rb)}X\rb)\rb)\\
=&-g\lb(\nabla^g_{\vphi\lb(X\rb)}\xi,\vphi(Y)\rb)+g\lb(\nabla^g_{\vphi\lb(Y\rb)}\xi,\vphi(X)\rb)\\
=&-d\eta(\vphi(X),\vphi(Y)).
\ea
\end{proof}
\begin{rmk}
In dimension $5$, any $2$-form $\beta$ satisfies
\be
\beta\lb(\vphi\lb(X\rb),\vphi\lb(Y\rb)\rb)=\begin{cases}
  \beta\lb(X,Y\rb) & \text{iff $\beta\in\Lambda^2_1\op\Lambda^2_3$}\\
  -\beta\lb(X,Y\rb) & \text{iff $\beta\in\Lambda^2_2$}\\
  0 & \text{iff $\beta\in\Lambda^2_4$}\ .
\end{cases}
\ee
\end{rmk}
We now consider an adapted frame $\lb(e_1,\ldots,e_5\rb)$ of an almost contact metric $5$-manifold $\lb(M^5,g,\xi,\eta,\vphi\rb)$ of class $\W_4\op\W_7$,
\ba
\Phi&=e_{1}\wedge e_{2}+e_{3}\wedge e_{4}, & \eta&=e_{5}.
\ea
Applying theorem \ref{thm:2} and proposition \ref{prop:3}, the structure equations yield
\ba
de_1 =& A_1\wedge e_2+A_2\wedge e_2+A_3\wedge e_3+A_4\wedge e_4-(2a_1+a_3)e_3\wedge e_5-(2a_2+a_4)e_4\wedge e_5,\\
de_2 =& -A_1\wedge e_1-A_2\wedge e_1+A_3\wedge e_4-A_4\wedge e_3+(2a_1+a_3)e_4\wedge e_5-(2a_2+a_4)e_3\wedge e_5,\\
de_3 =& A_1\wedge e_4-A_2\wedge e_4-A_3\wedge e_1+A_4\wedge e_2+(2a_1+a_3)e_1\wedge e_5+(2a_2+a_4)e_2\wedge e_5,\\
de_4 =& -A_1\wedge e_3+A_2\wedge e_3-A_3\wedge e_2-A_4\wedge e_1-(2a_1+a_3)e_2\wedge e_5+(2a_2+a_4)e_1\wedge e_5,\\
de_5 =& -2(a_1-a_3)(e_1\wedge e_3-e_2\wedge e_4)-2(a_2-a_4)(e_1\wedge e_4+e_2\wedge e_3)
\ea
for some $1$-forms $A_1,\ldots,A_4$ and functions $a_1,\ldots,a_4$. We discuss the integrability of this system in the case where $a_1,\ldots,a_4$ are constant and $A_1=A_3=A_4=0$. The corresponding system,
\ba
de_1 =& A_2\wedge e_2-(2a_1+a_3)e_3\wedge e_5-(2a_2+a_4)e_4\wedge e_5,\\
de_2 =& -A_2\wedge e_1+(2a_1+a_3)e_4\wedge e_5-(2a_2+a_4)e_3\wedge e_5,\\
de_3 =& -A_2\wedge e_4+(2a_1+a_3)e_1\wedge e_5+(2a_2+a_4)e_2\wedge e_5,\\
de_4 =& A_2\wedge e_3-(2a_1+a_3)e_2\wedge e_5+(2a_2+a_4)e_1\wedge e_5,\\
de_5 =& -2(a_1-a_3)(e_1\wedge e_3-e_2\wedge e_4)-2(a_2-a_4)(e_1\wedge e_4+e_2\wedge e_3),
\ea
will be denoted by ($\star$). A straightforward computation leads to the following lemma.
\begin{lem}
Let $a_1,\ldots,a_4$ be constants and $A_2$ be a $1$-form. Then, system ($\star$) is integrable if and only if
$a_1a_4=a_2a_3$ and
\be
dA_2=-2\big((a_1-a_3)(2a_1+a_3)+(a_2-a_4)(2a_2+a_4)\big)\lb(e_{1}\wedge e_{2}-e_{3}\wedge e_{4}\rb).
\ee
\end{lem}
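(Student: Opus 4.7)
The plan is to invoke $d^2=0$: I would apply $d$ to each of the five equations of ($\star$), substitute the right-hand sides of ($\star$) back in, and demand that the resulting $3$-forms vanish identically. Since $e_1,\ldots,e_5$ are pointwise linearly independent, this produces a finite list of algebraic conditions on $a_1,\ldots,a_4$ together with a prescription for $dA_2$. Because every cancellation is an algebraic identity in the basis forms and in the $a_i$, the same computation establishes both directions of the equivalence at once.

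First I would treat $d(de_5)$. Since $de_5$ contains neither $e_5$ nor $A_2$, $d(de_5)$ is a $3$-form whose basis components are of the form $A_2\wedge e_i\wedge e_j$ with $i,j\in\{1,\ldots,4\}$ or $e_i\wedge e_j\wedge e_5$. A short check shows that the $A_2$-contributions brought in through the expressions of $de_1,\ldots,de_4$ cancel by antisymmetry of $\wedge$. The remaining $e_i\wedge e_j\wedge e_5$ coefficients are polynomial in $a_1,\ldots,a_4$ of total degree two; all but one of these vanish identically, and the last one collapses to the single constraint $a_1a_4=a_2a_3$.

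Second, I would treat $d(de_k)$ for $k=1,\ldots,4$ under the assumption $a_1a_4=a_2a_3$. Each $de_k$ has the form $\pm A_2\wedge e_{k'}+(\text{linear combination of }e_i\wedge e_5)$ for a partner index $k'$, so $d(de_k)$ decomposes as a $dA_2\wedge e_{k'}$-term plus terms of types $A_2\wedge e_i\wedge e_5$ and $e_i\wedge e_j\wedge e_5$. After substituting ($\star$), the $A_2\wedge e_i\wedge e_5$-components cancel among themselves and the $e_i\wedge e_j\wedge e_5$-components vanish thanks to $a_1a_4=a_2a_3$. What remains is an identity $dA_2\wedge e_{k'}=\omega\wedge e_{k'}$ for a fixed $2$-form $\omega$ lying in the span of $\{e_i\wedge e_j:i,j\in\{1,\ldots,4\}\}$. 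Running over the four values of $k$ pins $dA_2$ down uniquely, and a direct expansion identifies $\omega$ as the claimed multiple of $e_1\wedge e_2-e_3\wedge e_4$.

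The main obstacle is pure bookkeeping: there are many cubic monomials to expand and several quadratic polynomials in $a_1,\ldots,a_4$ to simplify. To cut down the labor, I would exploit the equivariance of ($\star$) under the substitution $(e_1,e_2,e_3,e_4)\mapsto(e_3,e_4,-e_1,-e_2)$, $A_2\mapsto -A_2$, which maps the equation for $de_1$ to that for $de_3$, the one for $de_2$ to that for $de_4$, and fixes the equation for $de_5$; thanks to this symmetry it essentially suffices to verify $d(de_1)=0$ and, up to the same symmetry, the coefficient relation arising from $d(de_5)=0$.
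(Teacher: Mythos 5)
Your proposal is correct and is exactly the ``straightforward computation'' the paper invokes without writing out: imposing $d^2e_k=0$ for $k=1,\ldots,5$, where $d(de_5)=0$ yields $12(a_1a_4-a_2a_3)(e_1\wedge e_2\wedge e_5+e_3\wedge e_4\wedge e_5)=0$ and $d(de_k)=0$ for $k\le 4$ then pins down $dA_2$ as stated. One small caveat on your labor-saving remark: the symmetry $(e_1,e_2,e_3,e_4,A_2)\mapsto(e_3,e_4,-e_1,-e_2,-A_2)$ only exchanges the pairs $(1,3)$ and $(2,4)$, so you must still check $d(de_2)=0$ in addition to $d(de_1)=0$.
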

%
%
%
%
\section{Examples}\label{sec:5}\noindent
We fix real numbers $a_1,\ldots,a_4$ such that $a_1a_4=a_2a_3$. Applying Lie's third theorem (cf.\ \cite{Fla63}), system ($\star$), together with
\be
dA_2=\alpha\cdot F,
\ee
\ba
\alpha&:=-2\big((a_1-a_3)(2a_1+a_3)+(a_2-a_4)(2a_2+a_4)\big), & F&:=\lb(e_{1}\wedge e_{2}-e_{3}\wedge e_{4}\rb),
\ea
has a solution in the Euclidean space $\R^6$. Moreover, there exists a $6$-dimensional Lie group $G(a_1,a_2,a_3,a_4)$ such that $e_1,\ldots,e_5,A_2$ constitute a basis of the left invariant $1$-forms. Let $M^5$ be a submanifold of $G(a_1,a_2,a_3,a_4)$ such that the restricted forms $e_1,\ldots,e_5$ are linearly independent. On $M^5$, we define a Riemannian metric and an almost contact metric structure by the condition that $(e_1,\ldots,e_5)$ is an adapted frame. The corresponding almost contact metric manifold will be denoted by $M^5(a_1,a_2,a_3,a_4)$. The connection forms of the Levi-Civita connection with respect to $(e_1,\ldots,e_5)$ are
\ba
\omega^g_{12}&=A_2, & \omega^g_{13}&=(a_1+2a_3) e_5, & \omega^g_{14}&=(a_2+2a_4) e_5,\\
\omega^g_{23}&=(a_2+2a_4) e_5, & \omega^g_{24}&=-(a_1+2a_3) e_5, & \omega^g_{34}&=-A_2,\\
\ea
\ba
\omega^g_{15}&=-(a_1-a_3)e_3-(a_2-a_4)e_4, & \omega^g_{25}&=-(a_2-a_4)e_3+(a_1-a_3)e_4, \\ 
\omega^g_{35}&=(a_1-a_3)e_1+(a_2-a_4)e_2, & \omega^g_{45}&=(a_2-a_4)e_1-(a_1-a_3)e_2.
\ea
A direct computation involving these forms verifies that $M^5(a_1,a_2,a_3,a_4)$ is a generalized quasi-Sasaki manifold satisfying
\be
N(\xi,X,Y)=2\cdot d\eta(X,Y).
\ee
As a consequence of theorem \ref{thm:1}, there exists a unique metric connection $\nabla^c$ that is compatible with the underlying almost contact metric structure. The $1$-form $A_2$ determines $\nabla^c$ completely:
\ba
\nabla^c_Xe_1&=A_2(X)\cdot e_2, & \nabla^c_Xe_2&=-A_2(X)\cdot e_1, \\
\nabla^c_Xe_3&=-A_2(X)\cdot e_4, & \nabla^c_Xe_4&=A_2(X)\cdot e_3, & \nabla^c e_5&=0.
\ea
In order to identify the algebraic types of both $N$ and the torsion tensor of $\nabla^c$ (cf.\ theorem \ref{thm:2} and proposition \ref{prop:2}), we compute
\ba
\gamma+2\cdot d\eta &=6a_3\cdot Z_1+6a_4\cdot Z_2, & \gamma-d\eta &=6a_1\cdot Z_1+6a_2\cdot Z_2,
\ea
\ba
Z_1&:=e_1\wedge e_3-e_2\wedge e_4, & Z_2&:=e_1\wedge e_4+e_2\wedge e_3.
\ea
We summarize our results as follows.
\begin{thm}\label{thm:3}
The almost contact metric manifold $M^5(a_1,a_2,a_3,a_4)$ is a generalized quasi-Sasaki manifold. In addition, $M^5(a_1,a_2,a_3,a_4)$ has the following properties:
\bite
\item[a)] The Nijenhuis tensor $N$ satisfies
\be
N(\xi,X,Y)=2\cdot d\eta(X,Y).
\ee
Moreover, $N$ is totally skew-symmetric (resp.\ traceless cyclic) if and only if
\be
a_3=a_4=0\quad\text{(resp.\ $a_1=a_2=0$)}.
\ee
\item[b)] The $2$-form $F$ is closed.
\item[c)] The curvature tensor $R^c:\Lambda^2\ra\un(2)$ of the connection $\nabla^c$ is proportional to the projection onto $\lb\{t\cdot F\setsep t\in\R\rb\}$:
\be
R^c=\alpha\cdot F\ox F.
\ee
\item[d)] The Ricci tensor of $\nabla^c$ is
\be
\Ric^c=-\alpha\cdot\diag(1,1,1,1,0).
\ee
\item[e)] There exist two $\nabla^c$-parallel spinor fields in the spin subbundle defined by the Clifford product $F\cdot\Psi=0$.
\eite
\end{thm}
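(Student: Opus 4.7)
Here is my plan. The first identity in (a) and the statement that $M^5(a_1,\ldots,a_4)$ is a generalized quasi-Sasaki manifold are already established by the computation of the Levi-Civita forms $\omega^g_{ij}$ carried out just before the theorem, so nothing new is required for them. For the remaining assertions in (a) the plan is to invoke Theorem \ref{thm:2}, which tells us that $N$ is totally skew-symmetric precisely when $\gamma=-2\cdot d\eta$, and traceless cyclic precisely when $\gamma=d\eta$. The displayed identities $\gamma+2\,d\eta=6a_3 Z_1+6a_4 Z_2$ and $\gamma-d\eta=6a_1 Z_1+6a_2 Z_2$, together with the linear independence of $Z_1$ and $Z_2$, then translate these two conditions into $a_3=a_4=0$ and $a_1=a_2=0$ respectively. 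Claim (b) reduces to a short direct computation of $dF$ from system ($\star$): the $A_2$-contributions drop out by $A_2\wedge e_i\wedge e_i=0$, and the remaining three-fold wedge products of the form $e_i\wedge e_j\wedge e_5$ cancel in pairs.

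For (c) the first step is to read off from the formulas for $\nabla^c$ that the only non-vanishing connection one-forms are $\omega^c_{12}=A_2$ and $\omega^c_{34}=-A_2$. Since every entry of $\Omega^c$ is then a scalar multiple of the single one-form $A_2$, the matrix product $\Omega^c\wedge\Omega^c$ vanishes identically and the curvature reduces to $d\Omega^c$. Substituting $dA_2=\alpha\cdot F$ yields a curvature endomorphism supported on the single line $\R\cdot F\subset\un(2)\subset\so(5)$, and a short rewrite in intrinsic terms gives precisely $R^c=\alpha\cdot F\otimes F$. For (d) I would contract this formula and use that the skew endomorphism associated to $F$ acts as an almost complex structure on $\xi^\perp$ with $F(\xi)=0$; the contraction $\sum_i F(\,\cdot\,,e_i)F(\,\cdot\,,e_i)$ then equals $g-\eta\otimes\eta$, which in the adapted frame is $\diag(1,1,1,1,0)$, and the overall factor works out to $-\alpha$.

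For (e) the input from (c) together with the Ambrose--Singer theorem gives $\hol(\nabla^c)\subset\R\cdot F$, i.e.\ the reduced holonomy of $\nabla^c$ sits inside a copy of $\Un(1)\subset\Un(2)$. Lifting to $\spin(5)$ and noting that $F=e_1\wedge e_2-e_3\wedge e_4$ acts on the complex four-dimensional spin representation by Clifford multiplication as $e_1\cdot e_2-e_3\cdot e_4$, the commuting operators $e_1 e_2$ and $e_3 e_4$ each square to $-1$, so their difference has spectrum $\{0,0,2i,-2i\}$. The two-dimensional zero eigenspace is automatically holonomy invariant and therefore integrates to a rank-two $\nabla^c$-parallel subbundle of the spinor bundle, which is precisely the subbundle cut out by $F\cdot\Psi=0$.

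The only step I expect to require genuine care is (e), in keeping consistent sign and normalization conventions across the three different incarnations of $F$ (as a two-form on $M^5$, as a skew endomorphism of $TM^5$, and as a Clifford element of $\spin(5)$). Once those identifications are fixed, each of (a)--(e) becomes an essentially formal consequence of the structure equations ($\star$) and the displayed form of $\nabla^c$.
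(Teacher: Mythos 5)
Your proposal is correct and follows essentially the same route as the paper: the paper's proof of this theorem consists precisely of the preceding computations in Section 5 (the Levi-Civita connection forms, the explicit form of $\nabla^c$ determined by $A_2$, and the identities $\gamma+2\,d\eta=6a_3Z_1+6a_4Z_2$, $\gamma-d\eta=6a_1Z_1+6a_2Z_2$), combined with theorem \ref{thm:2} for part a) and the observation that the curvature of $\nabla^c$ reduces to $d\Omega^c$ with $dA_2=\alpha\cdot F$ for parts c)--e). Your fleshed-out derivations of b)--e), including the spectrum argument for the Clifford action of $F$ on spinors, are exactly the intended (standard) ones.
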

\begin{cor}
The almost contact metric $5$-manifold $M^5(a_1,a_2,a_3,a_4)$ is of class
\be
\W_4\op\W_7.
\ee
Moreover, $M^5(a_1,a_2,a_3,a_4)$ is of class $\W_4$ (resp.\ class $\W_7$) if and only if $a_3=a_4=0$ (resp.\ $a_1=a_2=0$).
\end{cor}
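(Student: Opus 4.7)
The plan is to derive the corollary as an immediate consequence of Theorem~\ref{thm:3}(a) combined with Theorem~\ref{thm:2}. By Theorem~\ref{thm:3}(a), the manifold $M^5(a_1,a_2,a_3,a_4)$ is a generalized quasi-Sasaki manifold that satisfies $N(\xi,X,Y)=2\cdot d\eta(X,Y)$. This is precisely the characterization of the class $\W_4\op\W_7$ established in the first statement of Theorem~\ref{thm:2}, so the first part of the corollary follows at once.

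For the refinement I would appeal to parts a) and b) of Theorem~\ref{thm:2}: once the manifold is known to lie in $\W_4\op\W_7$, it belongs to $\W_4$ exactly when $N$ is totally skew-symmetric, and to $\W_7$ exactly when $N$ is traceless cyclic. By the second assertion of Theorem~\ref{thm:3}(a), these algebraic types of $N$ are realized precisely under the parameter conditions $a_3=a_4=0$ and $a_1=a_2=0$, respectively. Threading the two equivalences together gives the two if-and-only-if statements of the corollary without further computation.

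Because both ingredients are already in place in the excerpt, there is essentially no genuine obstacle; the corollary is a repackaging of Theorem~\ref{thm:3} through the differential characterizations of Theorem~\ref{thm:2}. The only minor point one might want to double-check is that the decomposition $\W_4\op\W_7$ splits precisely into the totally skew-symmetric and traceless cyclic pieces at the level of the torsion of $\nabla^c$ (cf.\ proposition~\ref{prop:2} and the modules $\theta(\Lambda^2_2)$, $\vartheta(\Lambda^2_2)$ from \autoref{sec:2}); this directness of the sum is what guarantees that $\Gamma_4=0$ forces $\W_7$ and $\Gamma_7=0$ forces $\W_4$, and not merely the weaker one-sided implications.
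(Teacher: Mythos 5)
Your proposal is correct and follows essentially the same route as the paper: the corollary is obtained by combining Theorem~\ref{thm:3},a) (which records that $M^5(a_1,a_2,a_3,a_4)$ is generalized quasi-Sasaki with $N(\xi,X,Y)=2\,d\eta(X,Y)$ and identifies when $N$ is totally skew-symmetric resp.\ traceless cyclic via the computed expressions for $\gamma+2\,d\eta$ and $\gamma-d\eta$) with the characterizations in Theorem~\ref{thm:2}. Your closing remark about the directness of the decomposition is a reasonable sanity check but is already subsumed in the stated equivalences of Theorem~\ref{thm:2},a) and b).
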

Since
\ba
g\lb(\lb(\nabla^g_\xi\vphi\rb)(X),Y\rb)&=\lb(-2a_2-4a_4\rb)Z_1(X,Y)+\lb(2a_1+4a_3\rb)Z_2(X,Y),\\
g\lb(\lb(\nabla^g_X\vphi\rb)(\xi),Y\rb)&=\lb(a_2-a_4\rb)Z_1(X,Y)-\lb(a_1-a_3\rb)Z_2(X,Y)
\ea
and
\be
g\lb(\lb(\nabla^g_{\vphi(X)}\vphi\rb)(\vphi(Y)),\vphi(Z)\rb)=0,
\ee
we deduce the following proposition.
\begin{prop}
The almost contact metric $5$-manifold $M^5(a_1,a_2,a_3,a_4)$ is nearly cosymplectic (resp.\ quasi-cosymplectic) if and only if $a_1=-5a_3$ and $a_2=-5a_4$ (resp.\ $a_1=-2a_3$ and $a_2=-2a_4$).
\end{prop}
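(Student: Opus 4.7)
The plan is to split both defining identities into cases according to the splitting $TM = \ker\eta \oplus \R\cdot\xi$, then evaluate each case using the three displayed formulas together with two simple tools. The first is the $\vphi$-skew rule
\[
g\lb((\nabla^g_X\vphi)(Y),Z\rb) + g\lb(Y,(\nabla^g_X\vphi)(Z)\rb) = 0,
\]
obtained by differentiating $g(\vphi Y,Z) + g(Y,\vphi Z) = 0$, which transports the second displayed formula from the slot $(\cdot,\xi,\cdot)$ into $(\cdot,\cdot,\xi)$. The second is the observation that $\vphi|_{\ker\eta}$ is an automorphism with $\vphi^{-1} = -\vphi$, so the third displayed formula is equivalent to $g((\nabla^g_X\vphi)(Y),Z) = 0$ for all $X,Y,Z \in \ker\eta$. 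I will also use that $Z_1$ and $Z_2$ are $2$-forms vanishing as soon as one argument is $\xi$ and satisfy $Z_i(\vphi X,\vphi Y) = -Z_i(X,Y)$, since $Z_1, Z_2 \in \Lambda^2_2$.

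For the nearly-cosymplectic equivalence, polarise to $(\nabla^g_X\vphi)(Y) + (\nabla^g_Y\vphi)(X) = 0$. The case $X = Y = \xi$ is trivial because $\nabla^g_\xi\xi = 0$. For $X, Y \in \ker\eta$ the $\ker\eta$-component vanishes by the third displayed identity, while the $\xi$-component, rewritten via the $\vphi$-skew rule as $-g(Y,(\nabla^g_X\vphi)(\xi)) - g(X,(\nabla^g_Y\vphi)(\xi))$, cancels thanks to the bilinear skew-symmetry of $Z_1, Z_2$ applied to the second displayed identity. The effective constraint appears in the case $X = \xi$, $Y \in \ker\eta$, where adding the first two displayed identities gives
\[
g\lb((\nabla^g_\xi\vphi)(Y) + (\nabla^g_Y\vphi)(\xi),Z\rb) = -(a_2 + 5a_4)\cdot Z_1(Y,Z) + (a_1 + 5a_3)\cdot Z_2(Y,Z),
\]
and this vanishes for all $Y, Z$ exactly when $a_1 = -5a_3$ and $a_2 = -5a_4$.

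The quasi-cosymplectic equivalence follows the same pattern. The cases $X = Y = \xi$ and $Y = \xi$, $X \in \ker\eta$ are automatic; the second reduces to $\vphi(\nabla^g_X\xi) + \nabla^g_{\vphi X}\xi = 0$, and combining the second displayed identity with the Killing property of $\xi$ shows that both summands evaluate to the same linear combination of $Z_1, Z_2$ with opposite signs. For $X, Y \in \ker\eta$, the $\ker\eta$-component vanishes by the third identity applied to each of the two summands, while the $\xi$-component collapses because the parity $Z_i(\vphi X, \vphi Y) = -Z_i(X,Y)$ flips the sign of the second summand relative to the first. The genuine constraint comes from $X = \xi$: the defining identity reduces to $(\nabla^g_\xi\vphi)(Y) = 0$, and the first displayed identity immediately forces $a_1 = -2a_3$ and $a_2 = -2a_4$.

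The main obstacle is bookkeeping: verifying that each of the subcases called ``automatic'' really does cancel requires careful use of the parity $Z_i(\vphi X,\vphi Y) = -Z_i(X,Y)$ and of the $\vphi$-skew rule for $\nabla^g\vphi$ at the right places. Once these cancellations are in hand, both equivalences drop out of the single constraining subcase in each direction.
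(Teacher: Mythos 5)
Your proposal is correct and follows essentially the same route as the paper: the paper simply records the three displayed formulas for $\nabla^g\vphi$ evaluated against $\xi$ and on $\ker\eta$ and then ``deduces'' the proposition, and your argument is a careful write-out of exactly that deduction, with the constraining subcases yielding the coefficient sums $-(a_2+5a_4),\ a_1+5a_3$ (nearly cosymplectic) and $-2a_2-4a_4,\ 2a_1+4a_3$ (quasi-cosymplectic) as required. The auxiliary facts you invoke (skew-symmetry of $\nabla^g_X\vphi$, invertibility of $\vphi$ on $\ker\eta$, and $Z_1,Z_2\in\Lambda^2_2$) all hold here, so the ``automatic'' cancellations go through.
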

We now identify the Lie group $G(a_1,a_2,a_3,a_4)$ in those situations where at least one of the parameters $a_1,\ldots,a_4$ is zero, but
\be
a_1^2+a_2^2+a_3^2+a_4^2\neq0.
\ee
By definition, $a_1a_4=a_2a_3$. Consequently, we consider the following four cases:
\bite
\item[i)] $a_1^2+a_2^2\neq0$ and $a_3=a_4=0$.
\item[ii)] $a_3^2+a_4^2\neq0$ and $a_1=a_2=0$.
\item[iii)] $a_1^2+a_3^2\neq0$ and $a_2=a_4=0$.
\item[iv)] $a_2^2+a_4^2\neq0$ and $a_1=a_3=0$.
\eite
\begin{prop}\label{prop:7}
If $a_1^2+a_2^2\neq0$, the Lie group $G(a_1,a_2,0,0)$ is locally isomorphic to
\be
S^3\x S^3.
\ee
\end{prop}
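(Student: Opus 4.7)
The plan is to reduce to the case $a_2=0$ by a rotation in the $(e_3,e_4)$-plane, and then to exhibit an explicit splitting of the Lie algebra of $G(a_1,a_2,0,0)$ as $\mathfrak{su}(2)\oplus\mathfrak{su}(2)$.

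First I would verify that substituting $\tilde e_3:=\cos\theta\cdot e_3+\sin\theta\cdot e_4$ and $\tilde e_4:=-\sin\theta\cdot e_3+\cos\theta\cdot e_4$, while leaving $e_1,e_2,e_5,A_2$ unchanged, preserves the form of system ($\star$) together with the equation $dA_2=\alpha\cdot F$, at the price of replacing $(a_1,a_2)$ by $(a_1\cos\theta+a_2\sin\theta,\,-a_1\sin\theta+a_2\cos\theta)$. Choosing $\cos\theta=a_1/r$ and $\sin\theta=a_2/r$ with $r:=\sqrt{a_1^2+a_2^2}>0$ reduces the problem to the case $a_1=r$, $a_2=a_3=a_4=0$, in which $\alpha=-4r^2$.

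In this reduced situation I would introduce the left-invariant $1$-forms
\[
u_1:=e_1+e_4,\qquad u_2:=e_2+e_3,\qquad u_3:=A_2+2r\cdot e_5,
\]
\[
v_1:=e_1-e_4,\qquad v_2:=e_2-e_3,\qquad v_3:=A_2-2r\cdot e_5,
\]
which form a basis of the dual Lie algebra since $r\neq0$. A direct computation with ($\star$) and $dA_2=-4r^2(e_1\wedge e_2-e_3\wedge e_4)$ then yields
\[
du_1=u_3\wedge u_2,\qquad du_2=u_1\wedge u_3,\qquad du_3=-4r^2\,u_1\wedge u_2,
\]
together with the analogous identities for the $v_i$, and crucially no mixed terms $u_i\wedge v_j$ appear on any right-hand side. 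After a uniform rescaling each of the two systems becomes the Maurer--Cartan system of $\mathfrak{su}(2)$, so the Lie algebra of $G(a_1,a_2,0,0)$ decomposes as $\mathfrak{su}(2)\oplus\mathfrak{su}(2)$, which is the Lie algebra of $\SU(2)\x\SU(2)\cong S^3\x S^3$, and the desired local isomorphism follows.

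The main obstacle is the careful bookkeeping in the two computations above, especially verifying that the cross terms $u_i\wedge v_j$ cancel in $du_3$ and $dv_3$: this hinges on the fact that $dA_2+2r\,de_5$ and $dA_2-2r\,de_5$ reassemble as $-4r^2\,u_1\wedge u_2$ and $-4r^2\,v_1\wedge v_2$ respectively, which is a direct consequence of the precise constants appearing in ($\star$) for the reduced parameter regime.
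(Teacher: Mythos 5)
Your proof is correct and follows essentially the same route as the paper: exhibiting a left-invariant coframe that splits the Maurer--Cartan system into two decoupled copies of the $\su(2)$ structure equations. The only cosmetic difference is that you first normalize $(a_1,a_2)$ to $(\sqrt{a_1^2+a_2^2},0)$ by a rotation in the $(e_3,e_4)$-plane, whereas the paper absorbs that rotation and the rescaling directly into its definitions of $u_1,u_2,v_1,v_2$.
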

\begin{proof}
Introducing the linearly independent $1$-forms
\ba
u_1 &:= 2\lb(a_1\cdot e_1+a_2\cdot e_2+\sqrt{a_1^2+a_2^2}\cdot e_4\rb), &
u_2 &:= 2\lb(-a_2\cdot e_1+a_1\cdot e_2+\sqrt{a_1^2+a_2^2}\cdot e_3\rb),\\
u_3 &:= A_2+2\sqrt{a_1^2+a_2^2}\cdot e_5,
\ea
\ba
v_1 &:= 2\lb(a_1\cdot e_1+a_2\cdot e_2-\sqrt{a_1^2+a_2^2}\cdot e_4\rb), &
v_2 &:= 2\lb(-a_2\cdot e_1+a_1\cdot e_2-\sqrt{a_1^2+a_2^2}\cdot e_3\rb), \\
v_3 &:= A_2-2\sqrt{a_1^2+a_2^2}\cdot e_5,
\ea
system ($\star$), together with $dA_2=\alpha\cdot F$, is equivalent to
\ba
du_1&=-u_2\wedge u_3, & du_2&=-u_3\wedge u_1, & du_3&=-u_1\wedge u_2,\\
dv_1&=-v_2\wedge v_3, & dv_2&=-v_3\wedge v_1, & dv_3&=-v_1\wedge v_2.
\ea
\end{proof}
\begin{exa}\label{exa:1}
As a result of proposition \ref{prop:7}, the $5$-dimensional Stiefel manifold (see \cite{Sti35})
\be
V_{4,2}=\lb\{(x,y)\in\R^4\x\R^4\setsep \lan x,x\ran=\lan y,y\ran=1,\,\lan x,y\ran=0\rb\}\subset S^3\x S^3,\quad \lan x,y\ran=\sum_i x_iy_i,
\ee
admits a non-integrable almost contact metric structure of class $\W_4$. The corresponding Nijenhuis tensor is non-trivial and totally skew-symmetric (cf.\ remarks \ref{rmk:1} and \ref{rmk:2}).
\end{exa}
\begin{prop}\label{prop:6}
If $a_3^2+a_4^2\neq0$, the Lie group $G(0,0,a_3,a_4)$ is locally isomorphic to
\be
\widetilde{\SL(2,\R)}\x\widetilde{\SL(2,\R)}.
\ee
\end{prop}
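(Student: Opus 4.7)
The plan is to mirror the argument of Proposition \ref{prop:7}: build six linearly independent linear combinations of $e_1,\ldots,e_5,A_2$ that split into two triples $(u_1,u_2,u_3)$ and $(v_1,v_2,v_3)$, each realising the Maurer--Cartan equations of $\slin(2,\R)$. Lie's third theorem then yields the desired local isomorphism between $G(0,0,a_3,a_4)$ and $\widetilde{\SL(2,\R)}\x\widetilde{\SL(2,\R)}$.

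Setting $c:=\sqrt{a_3^2+a_4^2}>0$, I would introduce
\ba
u_1 &:= \sqrt{2}\lb(a_3\cdot e_1 + a_4\cdot e_2 + c\cdot e_4\rb), & u_2 &:= \sqrt{2}\lb(-a_4\cdot e_1 + a_3\cdot e_2 + c\cdot e_3\rb),\\
u_3 &:= A_2 + c\cdot e_5,
\ea
together with $v_1,v_2,v_3$ obtained by reversing the signs of the $e_4$-, $e_3$- and $e_5$-components respectively. Linear independence is immediate since the sums and differences $u_i\pm v_i$ recover $a_3\cdot e_1+a_4\cdot e_2$, $-a_4\cdot e_1+a_3\cdot e_2$, $e_3$, $e_4$, $e_5$ and $A_2$ up to non-zero scalars, and these six forms span the full cotangent space because $c\neq 0$.

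The computational heart of the argument is to substitute these definitions into system $(\star)$ with $a_1=a_2=0$ together with $dA_2 = 2c^2\cdot F$, and to verify
\ba
du_1 &= -u_2\wedge u_3, & du_2 &= -u_3\wedge u_1, & du_3 &= u_1\wedge u_2,
\ea
along with the analogous equations for the $v$'s. The identity $c^2 = a_3^2+a_4^2$ is precisely what collects the $e_5$-contribution to $du_1$ into $c\cdot e_5\wedge u_2$, and the scaling factor $\sqrt{2}$ is forced by comparing $dA_2 = 2c^2\cdot F$ with $u_1\wedge u_2 = 2c^2\cdot F + 2c\lb(a_3\cdot Z_1 + a_4\cdot Z_2\rb)$. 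The crucial sign flip in the $du_3$-equation (as compared to Proposition \ref{prop:7}) reflects the positivity of $\alpha = 2c^2$ here, in contrast to $\alpha = -4(a_1^2+a_2^2)<0$ there, and is exactly what changes the Lie algebra type from $\su(2)$ to $\slin(2,\R)$.

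The resulting structure constants correspond to the brackets $[e_2,e_3]=e_1$, $[e_3,e_1]=e_2$, $[e_1,e_2]=-e_3$, a Lie algebra whose Killing form has signature $(+,+,-)$ and which is therefore $\slin(2,\R)\cong\so(2,1)$. The main obstacle I foresee is purely administrative: one has to track signs carefully to confirm that the $u$- and $v$-systems really decouple, with no cross terms between them, so that the Lie algebra splits as $\slin(2,\R)\op\slin(2,\R)$.
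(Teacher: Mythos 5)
Your proposal is correct and coincides with the paper's own proof: the same frame $u_i,v_i$ (including the $\sqrt{2}$ normalization and the sign flips on the $e_3$-, $e_4$-, $e_5$-components), the same target Maurer--Cartan equations $du_1=-u_2\wedge u_3$, $du_2=-u_3\wedge u_1$, $du_3=u_1\wedge u_2$, and the same identification of the resulting algebra as $\slin(2,\R)\op\slin(2,\R)$. The extra checks you supply (the value $\alpha=2c^2$, the expansion of $u_1\wedge u_2$, and the Killing-form signature) are all accurate.
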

\begin{proof}
We define
\ba
u_1 &:= \sqrt{2}\lb(a_3\cdot e_1+a_4\cdot e_2+\sqrt{a_3^2+a_4^2}\cdot e_4\rb), &
u_2 &:= \sqrt{2}\lb(-a_4\cdot e_1+a_3\cdot e_2+\sqrt{a_3^2+a_4^2}\cdot e_3\rb),\\
u_3 &:= A_2+\sqrt{a_3^2+a_4^2}\cdot e_5
\ea
and
\ba
v_1 &:= \sqrt{2}\lb(a_3\cdot e_1+a_4\cdot e_2-\sqrt{a_3^2+a_4^2}\cdot e_4\rb), &
v_2 &:= \sqrt{2}\lb(-a_4\cdot e_1+a_3\cdot e_2-\sqrt{a_3^2+a_4^2}\cdot e_3\rb), \\
v_3 &:= A_2-\sqrt{a_3^2+a_4^2}\cdot e_5.
\ea
Using these linearly independent $1$-forms, the equations of system ($\star$), together with $dA_2=\alpha\cdot F$, transform to
\ba
du_1&=-u_2\wedge u_3, & du_2&=-u_3\wedge u_1, & du_3&=u_1\wedge u_2,\\
dv_1&=-v_2\wedge v_3, & dv_2&=-v_3\wedge v_1, & dv_3&=v_1\wedge v_2.
\ea
\end{proof}
\begin{prop}\label{prop:4}
Let $a_1,\ldots,a_4$ be constants such that $a_2=a_4=0$ and
\bite
\item[a)] $a_1=a_3\neq0$.
\item[b)] $-2a_1=a_3\neq0$.
\item[c)] $(a_1-a_3)(2a_1+a_3)>0$.
\item[d)] $(a_1-a_3)(2a_1+a_3)<0$.
\eite
Then the corresponding Lie group $G(a_1,a_2,a_3,a_4)$ is locally isomorphic to
\bite
\item[a)] $\R^6$.
\item[b)] $H^5\x\R$.
\item[c)] $S^3\x S^3$.
\item[d)] $\widetilde{\SL(2,\R)}\x\widetilde{\SL(2,\R)}$.
\eite
Here, $H^5$ denotes the $5$-dimensional Heisenberg group.
\end{prop}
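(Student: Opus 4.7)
My plan is to handle each of the four cases by exhibiting an explicit linear change of basis among the left-invariant 1-forms $e_1,\ldots,e_5,A_2$ on $G(a_1,0,a_3,0)$ that converts the Maurer-Cartan equations into those of the stated target group, mirroring the strategy of propositions \ref{prop:7} and \ref{prop:6}. Specialising system ($\star$) and $dA_2=\alpha\cdot F$ to $a_2=a_4=0$ and writing $p:=2a_1+a_3$, $q:=a_1-a_3$ (so that $\alpha=-2pq$), the equations reveal a natural $(+,-)$ decomposition of the tangent space.

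For cases c) and d), where $p\neq 0$, I would set
\ba
u_1 &:= K(e_1+e_4), & u_2 &:= K(e_2+e_3), & u_3 &:= A_2+p\,e_5,\\
v_1 &:= K(e_1-e_4), & v_2 &:= K(e_2-e_3), & v_3 &:= A_2-p\,e_5,
\ea
with $K:=\sqrt{|2pq|}$. A direct substitution into the structure equations then yields $du_1=-u_2\wedge u_3$, $du_2=-u_3\wedge u_1$ and $du_3=\mp u_1\wedge u_2$ with the sign equal to $\sgn(pq)$, and analogously for the $v_i$'s. Case c) $pq>0$ reproduces the Maurer-Cartan equations of $S^3\x S^3$ exactly as in proposition \ref{prop:7}; case d) $pq<0$ reproduces those of $\widetilde{\SL(2,\R)}\x\widetilde{\SL(2,\R)}$ as in proposition \ref{prop:6}.

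For the two degenerate cases the change of basis above fails or becomes singular, so I would treat them directly. In case a), where $q=0$ but $p\neq 0$, one has $d\omega_\pm=0$ for $\omega_\pm:=A_2\pm p\,e_5$, and a further rotation within the $(e_1,e_2,e_3,e_4)$-block can be chosen to render all six generators closed, producing the abelian Lie algebra $\R^6$. In case b), where $p=0$ but $q\neq 0$, the splitting collapses since $\omega_+=\omega_-=A_2$; instead $dA_2=0$, and the remaining equations reduce to $de_i=\pm A_2\wedge e_j$ for $i=1,\ldots,4$ together with $de_5=-6a_1(e_1\wedge e_3-e_2\wedge e_4)$. This identifies the subalgebra spanned by $X_1,\ldots,X_5$ with the $5$-dimensional Heisenberg algebra whose centre is $X_5$, while $A_2$ provides the additional $\R$ summand. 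The delicate point in case b) is to verify that the extension of $\mathfrak{h}^5$ by $A_2$ is trivial (that is, that $A_2$ can be chosen to act by inner derivations on the Heisenberg part after an appropriate rearrangement of the basis), which I expect to be the main obstacle to the whole argument.
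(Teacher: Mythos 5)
Your treatment of cases c) and d) is correct and is essentially the paper's own argument: the paper takes $u_1=e_1+e_4$, $u_2=e_2+e_3$, $u_3=A_2+(2a_1+a_3)e_5$ and the corresponding $v_i$ with the opposite signs, arriving at $du_3=-2(a_1-a_3)(2a_1+a_3)\,u_1\wedge u_2$; your rescaling by $K=\sqrt{|2pq|}$ merely normalizes that constant to $\mp1$, so there is no issue there.

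The genuine gap is in the degenerate cases, and it is precisely the point you flag as ``the main obstacle.'' In case b) your plan to show that the extension of $\mfr{h}^5$ by $A_2$ is trivial cannot succeed. From $de_1=A_2\wedge e_2$, $de_2=-A_2\wedge e_1$, $de_3=-A_2\wedge e_4$, $de_4=A_2\wedge e_3$ the generator $X_6$ dual to $A_2$ acts on the Heisenberg part by $X_1\mapsto X_2$, $X_2\mapsto-X_1$, $X_3\mapsto-X_4$, $X_4\mapsto X_3$, while every inner derivation of $\mfr{h}^5$ takes values in the one-dimensional centre $\R X_5$ (the derived algebra coincides with the centre). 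Hence the derivation is outer; moreover $\ad(X_6)$ has eigenvalues $\pm i$, so the six-dimensional algebra is not nilpotent, whereas $\mfr{h}^5\op\R$ is --- the two Lie algebras are not isomorphic, and no constant rearrangement of the left-invariant coframe can produce the direct-sum structure equations. The same difficulty arises in case a): with constant coefficients the algebra is $\mfr{e}(2)\op\mfr{e}(2)$, which is not abelian, so no constant ``rotation within the $(e_1,\ldots,e_4)$-block'' can close all six generators. The device the paper uses, and which is missing from your proposal, is a rotation by a \emph{non-constant} angle: one takes local primitives $f,g$ of the closed forms $A_2\pm3a_1e_5$ in case a) (resp.\ $f$ with $df=A_2$ in case b)) and passes to the position-dependent frame $u_1=\cos(f)(e_1+e_4)-\sin(f)(e_2+e_3)$, $u_2=\sin(f)(e_1+e_4)+\cos(f)(e_2+e_3)$, and similarly for $e_1-e_4$, $e_2-e_3$ with $g$ (resp.\ $f$). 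In this non-invariant coframe the structure equations become exactly those of $\R^6$, resp.\ $H^5\x\R$, and this local equivalence of coframes is the sense in which the paper identifies $G$ in these two cases. Without this step, cases a) and b) do not go through.
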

\begin{proof}
Suppose $a_1=a_3\neq0$ and $a_2=a_4=0$. In this case, we have to solve
\ba
de_1&=A_2\wedge e_2-3a_1\cdot e_3\wedge e_5, & de_2&=-A_2\wedge e_1+3a_1\cdot e_4\wedge e_5, \\
de_3&=-A_2\wedge e_4+3a_1\cdot e_1\wedge e_5, & de_4&=A_2\wedge e_3-3a_1\cdot e_2\wedge e_5, \\
de_5&=0, &dA_2&=0.
\ea
Locally, each of the $1$-forms $A_2+3a_1\cdot e_5$, $A_2-3a_1\cdot e_5$ is the differential of some function,
\ba
A_2+3a_1\cdot e_5&=df, & A_2-3a_1\cdot e_5&=dg.
\ea
Introducing a new frame $(u_1,\ldots,u_6)$ by
\ba
u_1 &:= \cos(f)(e_1+e_4)-\sin(f)(e_2+e_3), &
u_2 &:= \sin(f)(e_1+e_4)+\cos(f)(e_2+e_3), \\
u_3 &:= \cos(g)(e_1-e_4)-\sin(g)(e_2-e_3), &
u_4 &:= \sin(g)(e_1-e_4)+\cos(g)(e_2-e_3), \\
u_5 &:= A_2+3a_1\cdot e_5, &
u_6 &:= A_2-3a_1\cdot e_5,
\ea
we obtain
\ba
du_1&=0, & du_2&=0, & du_3&=0, & du_4&=0, & du_5&=0, & du_6&=0.
\ea
We now suppose $-2a_1=a_3\neq0$ and $a_2=a_4=0$. Here, the equations reduce to
\ba
de_1&=A_2\wedge e_2, & de_2&=-A_2\wedge e_1, & de_3&=-A_2\wedge e_4, & de_4&=A_2\wedge e_3,
\ea
\ba
de_5&=-6a_1(e_1\wedge e_3-e_2\wedge e_4), & dA_2&=0.
\ea
Again, we introduce a new frame $(u_1,\ldots,u_6)$,
\ba
u_1 &:= \cos(f)(e_1+e_4)-\sin(f)(e_2+e_3), &
u_2 &:= \sin(f)(e_1+e_4)+\cos(f)(e_2+e_3), \\
u_3 &:= \sin(f)(e_1-e_4)+\cos(f)(e_2-e_3), &
u_4 &:= \cos(f)(e_1-e_4)-\sin(f)(e_2-e_3), \\
u_5 &:= -\frac{2}{3a_1}e_5, &
u_6 &:= A_2 = df.
\ea
Using $(u_1,\ldots,u_6)$, the equations transform to
\ba
du_1&=0, & du_2&=0, & du_3&=0, & du_4&=0, & du_5&=2(u_1\wedge u_2+u_3\wedge u_4), & du_6&=0.
\ea
Finally, we suppose $(a_1-a_3)(2a_1+a_3)\neq0$ and $a_2=a_4=0$. In this case, we define 
\ba
u_1 &:= e_1+e_4, &
u_2 &:= e_2+e_3, &
u_3 &:= A_2+(2a_1+a_3)e_5,\\
v_1 &:= e_1-e_4, &
v_2 &:= e_2-e_3, &
v_3 &:= A_2-(2a_1+a_3)e_5,
\ea
and system ($\star$), together with $dA_2=\alpha\cdot F$, is equivalent to
\ba
du_1&=-u_2\wedge u_3, & du_2&=-u_3\wedge u_1, & du_3&=-2(a_1-a_3)(2a_1+a_3)u_1\wedge u_2,\\
dv_1&=-v_2\wedge v_3, & dv_2&=-v_3\wedge v_1, & dv_3&=-2(a_1-a_3)(2a_1+a_3)v_1\wedge v_2.
\ea
\end{proof}
\begin{rmk}\label{rmk:3}
After swapping $a_1$ and $a_2$ as well as $a_3$ and $a_4$, an analogous proof shows that proposition \ref{prop:4} remains valid.
\end{rmk}
%
%
%
%
\begin{bibdiv}
\begin{biblist}
\bib{Bla67}{article}{
  author={Blair, D.E.},
  title={The theory of quasi-Sasakian structures},
  journal={J. Differential Geom.},
  volume={1},
  date={1967},
  pages={331--345}
}
\bib{Bla71}{article}{
  author={Blair, D.E.},
  title={Almost contact manifolds with Killing structure tensors},
  journal={Pacific J. Math.},
  volume={39},
  date={1971},
  pages={285--292}
}
\bib{Bla76}{book}{
  author={Blair, D.E.},
  title={Contact manifolds in Riemannian geometry},
  series={Lecture Notes in Math.},
  volume={509},
  publisher={Springer},
  date={1976}
}
\bib{Car25}{article}{
  author={Cartan, E.},
  title={Sur les vari\'et\'es \`a connexion affine et la th\'eorie de la relativit\'e g\'en\'eralis\'ee (deuxi\`eme partie)},
  journal={Ann. Sci. \'Ecole Norm. Sup. (3)},
  volume={42},
  date={1925},
  pages={17--88}
}
\bib{CG90}{article}{
  author={Chinea, D.},
  author={Gonzalez, C.},
  title={A classification of almost contact metric manifolds},
  journal={Ann. Mat. Pura Appl. (4)},
  volume={156},
  date={1990},
  pages={15--36}
}
\bib{CM92}{article}{
  author={Chinea, D.},
  author={Marrero, J.C.},
  title={Classification of almost contact metric structures},
  journal={Rev. Roumaine Math. Pures Appl.},
  volume={37},
  date={1992},
  pages={199--212}
}
\bib{Fla63}{book}{
  author={Flanders, H.},
  title={Differential forms with applications to the physical sciences},
  publisher={Academic Press},
  date={1963}
}
\bib{Fri03}{article}{
  author={Friedrich, T.},
  title={On types of non-integrable geometries},
  journal={Rend. Circ. Mat. Palermo (2) Suppl.},
  volume={71},
  date={2003},
  pages={99--113}
}
\bib{FI02}{article}{
  author={Friedrich, T.},
  author={Ivanov, S.},
  title={Parallel spinors and connections with skew-symmetric torsion in string theory},
  journal={Asian J. Math.},
  volume={6},
  date={2002},
  pages={303--336},
}
\bib{FI03}{article}{
  author={Friedrich, T.},
  author={Ivanov, S.},
  title={Almost contact manifolds, connections with torsion, and parallel spinors},
  journal={J. Reine Angew. Math.},
  volume={559},
  date={2003},
  pages={217--236},
}
\bib{Gra59}{article}{
  author={Gray, J.W.},
  title={Some global properties of contact structures},
  journal={Ann. of Math. (2)},
  volume={69},
  date={1959},
  pages={421--450},
}
\bib{Oub85}{article}{
  author={Oubina, J.},
  title={New classes of almost contact metric structures},
  journal={Publ. Math. Debrecen},
  volume={32},
  date={1985},
  pages={187--193}
}
\bib{Puh12}{article}{
  author={Puhle, C.},
  title={Almost contact metric $5$-manifolds and connections with torsion},
  journal={Differential Geom. Appl.},
  volume={30},
  date={2012},
  pages={85--106},
}
\bib{SH61}{article}{
  author={Sasaki, S.},
  author={Hatakeyama, Y.},
  title={On differentiable manifolds with certain structures which are closely related to almost contact structure, II},
  journal={Tohoku Math. J.},
  volume={13},
  date={1961},
  pages={281--294}
}
\bib{SH62}{article}{
  author={Sasaki, S.},
  author={Hatakeyama, Y.},
  title={On differentiable manifolds with contact metric structures},
  journal={J. Math. Soc. Japan},
  volume={14},
  date={1962},
  pages={249--271}
}
\bib{Sti35}{article}{
  author={Stiefel, E.},
  title={Richtungsfelder und Fernparallelismus in $n$-dimensionalen Mannigfaltigkeiten},
  journal={Comment. Math. Helv.},
  volume={8},
  date={1935},
  pages={305--353}
}
\end{biblist}
\end{bibdiv}
\end{document}